\documentclass[nohyperref]{article}

\usepackage{microtype}
\usepackage{graphicx}
\usepackage{subfigure}
\usepackage{booktabs} 

\usepackage{hyperref}

\usepackage[noend]{algorithmic}

\usepackage[accepted]{icml2022}


\usepackage{amsmath}
\usepackage{amssymb}
\usepackage{mathtools}
\usepackage{amsthm}
\usepackage{mathrsfs}

\usepackage[capitalize,noabbrev]{cleveref}

\theoremstyle{plain}
\newtheorem{theorem}{Theorem}[section]
\newtheorem{proposition}[theorem]{Proposition}
\newtheorem{lemma}[theorem]{Lemma}

\theoremstyle{definition}
\newtheorem{definition}[theorem]{Definition}

\theoremstyle{remark}

\DeclareMathOperator\sgn{sgn}
\newcommand{\utime}{\mathscr{T}}
\newcommand\scalemath[2]{\scalebox{#1}{\mbox{\ensuremath{\displaystyle #2}}}}

\usepackage[textsize=tiny]{todonotes}

\icmltitlerunning{Escaping Saddle Points Efficiently with Occupation-Time-Adapted Perturbations}

\begin{document}

\icmltitle{Escaping Saddle Points Efficiently with Occupation-Time-Adapted Perturbations} 



\icmlsetsymbol{equal}{*}

\begin{icmlauthorlist}
\icmlauthor{Xin Guo}{equal,uni1}
\icmlauthor{Jiequn Han}{equal,uni2}
\icmlauthor{Mahan Tajrobehkar}{equal,uni1}
\icmlauthor{Wenpin Tang}{equal,uni3}
\end{icmlauthorlist}

\icmlaffiliation{uni1}{Department of Industrial Engineering and Operations Research, UC Berkeley, Berkeley, CA, USA}
\icmlaffiliation{uni2}{Center of Computational Mathematics, Flatiron Institute, New York, NY, USA}
\icmlaffiliation{uni3}{Department of Industrial Engineering and Operations Research, Columbia University, New York City, NY, USA}

\icmlcorrespondingauthor{Xin Guo}{xinguo1@berkeley.edu}
\icmlcorrespondingauthor{Jiequn Han}{jiequnh@princeton.edu}
\icmlcorrespondingauthor{Mahan Tajrobehkar}{mahan\_tajrobehkar@berkeley.edu}
\icmlcorrespondingauthor{Wenpin Tang}{wt2319@columbia.edu}

\icmlkeywords{Machine Learning, ICML}

\vskip 0.3in



\printAffiliationsAndNotice{\icmlEqualContribution} 

\begin{abstract}
Motivated by the super-diffusivity of self-repelling random walk, which has roots in statistical physics, this paper develops a new perturbation mechanism for optimization algorithms. In this mechanism, perturbations are adapted to the history of states via the notion of occupation time. After integrating this mechanism into the framework of perturbed gradient descent (PGD) and perturbed accelerated gradient descent (PAGD), two new algorithms are proposed: perturbed gradient descent adapted to occupation time (PGDOT) and its accelerated version (PAGDOT). PGDOT and PAGDOT are shown to converge to second-order stationary points at least as fast as PGD and PAGD, respectively, and thus they are guaranteed to avoid getting stuck at non-degenerate saddle points. The theoretical analysis is corroborated by empirical studies in which the new algorithms consistently escape saddle points and outperform not only their counterparts, PGD and PAGD, but also other popular alternatives including stochastic gradient descent, Adam, AMSGrad, and RMSProp.
\end{abstract}

\section{Introduction}
\label{s1}

Gradient descent (GD), which dates back to \cite{Cauchy}, aims to minimize a function $f: \mathbb{R}^d \rightarrow \mathbb{R}$ via the iteration:
$
\pmb{x}_{t+1} = \pmb{x}_t - \eta \nabla f(\pmb{x}_t), t = 0, 1, 2, \ldots,
$
where $\eta >0$ is the step size  and $\nabla f$ is the gradient of $f$.
Due to its simple form and fine computational properties, GD and its variants (e.g., stochastic gradient descent) are essential for many machine learning tools: principle component analysis \cite{CLMW}, phase retrieval \cite{CLS}, and deep neural network \cite{RHW}, just to name a few.
In the era of data deluge, many problems are concerned with large-scale optimization in which the intrinsic dimension $d$ is large. 
GD turns out to be efficient in dealing with high-dimensional convex optimization, where the first-order stationary point 
$\nabla f(\pmb{x}) = 0 $ is necessarily the global minimum point.
Algorithmically, it involves finding a point with small gradient $|| \nabla f(\pmb{x})|| < \epsilon$. 
A classical result of \cite{Nesterov04} showed that the time required by GD to find such a point in a possibly non-convex problem is of order $\epsilon^{-2}$, independent of the dimension $d$.

In non-convex settings, applying GD will still lead to an approximate first-order stationary point.
However,  this is not sufficient: for non-convex functions, first-order stationary points can be either global minimum, local minimum, local maximum,  or saddle points.
As we will explain, saddle points are the main bottleneck for GD in many non-convex problems.
The goal of  this paper is 
therefore to develop efficient algorithms to escape saddle points in high-dimensional non-convex problems, and hence overcome the curse of dimensionality.

{\bf Escape local minima}: 
Inspired by annealing in metallurgy, \cite{KGV} developed simulated annealing to approximate the global minimum of a given function.
\cite{GH86} proposed a diffusion simulated annealing and proved that it converges to the set of global minimum points.
However, subsequent works \cite{HKS89, MSTW, Miclo, Mon18, TZ21} revealed that it might take an exponentially long time (of order $\exp(d)$) for diffusion simulated annealing to get close to the global minimum.
Some work, e.g., methods based on L\'evy flights \cite{Pav07} or Cuckoo's search \cite{YD09} showed empirically faster convergence to the global minimum. 
Yet  the theory of these approaches is far-fetched.
There are recent efforts in approximating the global minimum in non-convex problems via Langevin dynamics-based stochastic gradient descent \cite{RRT17, CDT20}, 
along with its variants using non-reversibility \cite{HW20}
and replica exchange \cite{CC19, DT21}.
Typically, these algorithms take polynomial time in the dimension $d$,
and thus may scale poorly when $d$ is large. 

{\bf Escape saddle points}: 
Fortunately, in many non-convex problems, it suffices to find a local minimum.  
Indeed, there has been a line of recent work arguing that local minima are less problematic, and that for many non-convex problems there are no spurious local minima.
That is, all local minima are comparable in value with the global minimum.
Examples include tensor decomposition \cite{GHJY, GLM18, GM17, SBRL19}, semidefinite programming \cite{BBV16, MMMO17}, dictionary learning \cite{SQW1}, phase retrieval \cite{SQW2}, robust regression \cite{MBM18}, low-rank matrix factorization \cite{BNS16, GJZ17, GLM16, DACS}, 
and certain classes of deep neural networks \cite{CHMBL, DVSH18, K16, KLD19, LSLS18, NH17, VBB19, WLL18}.
Nevertheless, as shown in \cite{DB14, DJL17, JJKN17}, saddle points may correspond to suboptimal solutions, and it may take exponentially long time to move from saddle points to a local minimum point. Meanwhile, it has  been observed in empirical studies \cite{dauphin2014identifying,swirszcz2016local} that GD and its variants with momentum such as Adam \cite{KB14} may be trapped in saddle points.

\cite{GHJY} took the first step to show that by adding noise at each iteration, GD can escape all saddle points in polynomial time.
Additionally, \cite{DLT18, LSJR16} proved that with random initialization, GD converges to a local minimizer.
Moreover, \cite{JG17} proposed the perturbed gradient descent (PGD) algorithm, which
 \cite{JNM18} further improved to the perturbed accelerated gradient descent (PAGD) algorithm.
They showed that PGD and PAGD are efficient -- the time complexity is almost independent of the dimension $d$.
See also \cite{JG19} for a summary of results in this direction.

\paragraph{Our idea.} 
Motivated by the ``fast exploration'' of self-repelling random walk, this paper develops a new perturbation mechanism by adapting the perturbations to the history of states.
Recall that \cite{JG17, JG19} used the following perturbation update when perturbation conditions hold:
\begin{align*}
\pmb{x}'_t  = \pmb{x}_t + \mbox{Unif}(B^d(\pmb{0}, r)), \quad \pmb{x}_{t+1} = \pmb{x}'_t - \eta \nabla f(\pmb{x}'_t),
\end{align*}
where $\mbox{Unif}(B^d(\pmb{0}, r))$ is a point picked uniformly in the ball of radius $r$. 
On the empirical side, \cite{NV15, ZL19} applied this idea of GD with noise to train deep neural networks.

Our idea is to replace $\mbox{Unif}(B^d(\pmb{0}, r))$ with non-uniform perturbations,
whose mechanism depends on the current state $\pmb{x}_t$ and  the history of states $\{\pmb{x}_s; \, s \le t\}$.
There are conceivably many ways to add non-uniform perturbation based on the current and previous states;
here we choose to adapt perturbations to the ``occupation time''.

The intuition is illustrated by the one-dimensional function $f(x) = x^3$ (see Figure \ref{fig:xcubed}).  

\begin{figure}[ht]
\centering
\includegraphics[width=0.3\columnwidth]{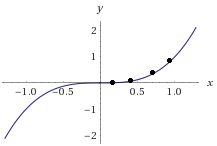}
\vspace{-0.05in}
\caption{Illustration of occupation-time-adapted perturbation using $f(x)=x^3$.}
\vspace{-0.1in}
\label{fig:xcubed}
\end{figure}

There is a saddle point at $0$, and imagine GD approaches $0$ from the right.
It can be shown that GD converges monotonically to a stationary point (see Appendix A).
The uniform perturbation will add noise with probability $1/2$ both to the right and to the left.
To the right, GD will again get stuck at the saddle point $0$. However, to the left, there is a possibility of escaping from $0$ and finding a local minimum ($-\infty$ in this case).
Therefore, it is reasonable to add noise with a larger probability to the left, since it has spent a long time on the right and has yet to explore the left side.

The previous intuition can be quantified via the notion of occupation times $L_t$ (the number of $\{x_s\}_{s<t}$ to the left of $x_t$) and $R_t$ (the number of $\{x_s\}_{s<t}$ to the right of $x_t$).
By definition, $R_t + L_t = t$, for each $t = 0,1, \ldots$.
If $L_t$ is larger, the perturbation will push the iterate $x_t$ to the right; and if $R_t$ is larger,  push to the left.
More precisely, 
\begin{equation}
\label{eq:1Dreverse}
x_{t+1} = \left\{\begin{array}{rl}
x_t - r \, \mbox{Unif}(0,1) & \mbox{with probability $p$}, \\ 
x_t + r \, \mbox{Unif}(0,1) & \mbox{with probability $1-p$},
\end{array}\right.
\end{equation}
where $p = \frac{w(R_t)}{w(L_t) + w(R_t)}$ and $w: \{0,1,\ldots\} \to (0,\infty)$ is an increasing weight function on the nonnegative integers (e.g., $w(n) = 1 + n^{\alpha}$ for $\alpha > 0$).

The dynamics \eqref{eq:1Dreverse} is closely related to the vertex-repelling random walk defined by
\begin{equation}
\label{eq:vrrw}
Z_{t+1} = \left\{ \begin{array}{rcl}
Z_t - 1 & \mbox{with probability } \frac{w(\widetilde{R}_t)}{w(\widetilde{L}_t) + w(\widetilde{R}_t)}, \\ 
Z_t + 1 & \mbox{with probability } \frac{w(\widetilde{L}_t)}{w(\widetilde{L}_t) + w(\widetilde{R}_t)},
\end{array}\right.
\end{equation}
where $\widetilde{R}_t:= \{s < t: Z_s = Z_t + 1\}$ and $\widetilde{L}_t:= \{s < t: Z_s = Z_t - 1\}$.
This (non-Markovian) random walk model was introduced by \cite{PP87} 
in the statistical physics literature. 
Based on the scaling arguments and simulations, 
it was conjectured that the walk $(Z_t, \, t \ge 0)$ is recurrent and is further super-diffusive in the sense that
$\mathbb{E}Z_t^2 \sim t^{\frac{4}{3}}$,
whereas for a simple random walk $(S_t, \, t \ge 0)$ its exploration range is $\mathbb{E}S_t^2 \sim t \ll t^{\frac{4}{3}}$.
These properties have only been proved rigorously for a simpler variant -- the edge-repelling random walk, see \cite{Davis90, Toth95}.
A counterpart to the vertex-repelling walk is the vertex-reinforced walk \cite{Pem92, Vol06} defined by $Z_{t+1} = Z_t - 1$ with probability $\frac{w(\widetilde{L}_t)}{w(\widetilde{L}_t) + w(\widetilde{R}_t)}$, and
$Z_{t+1} = Z_t + 1$ with probability $\frac{w(\widetilde{R}_t)}{w(\widetilde{L}_t) + w(\widetilde{R}_t)}$.
It is well known \cite{Tarres04, Vol06} that vertex-reinforced random walk exhibits localization at a finite number of points for some choices of $w(\cdot)$, e.g., $w(n) \sim n^{\alpha}$ with $\alpha \ge 1$.

\paragraph{Our results.} We will first show that vertex-repelling walk will never be localized or stuck at some points 
in contrast with vertex-reinforced walk (see Theorem \ref{thm:nolocal} below).
The non-localization and the (conjectured) super-diffusive properties of the vertex-repelling walk \eqref{eq:vrrw} facilitate exploration, and thus the corresponding perturbation scheme \eqref{eq:1Dreverse} makes it more likely to escape from saddle points.

We will then propose a new perturbation mechanism based on the dynamics \eqref{eq:1Dreverse}, which can be integrated into the framework of (any) perturbation-based optimization algorithms. In particular, integrating the above-mentioned mechanism into the framework of PGD and PAGD, we propose two new algorithms: perturbed gradient descent adapted to occupation time (PGDOT, Algorithm \ref{algo:PGDOTMETA}) and its accelerated version, perturbed accelerated gradient descent adapted to occupation time  (PAGDOT, Algorithm \ref{algo:PAGDOTMETA}).

We will  prove that Algorithm \ref{algo:PGDOTMETA} (resp. Algorithm \ref{algo:PAGDOTMETA}) converges to a second-order stationary point at least as fast as PGD (resp. PAGD).

\begin{algorithm}[ht]
\caption{\small{Perturbed Gradient Descent Adapted to Occupation  Time (Meta Algorithm)}}
\label{algo:PGDOTMETA}
    \begin{algorithmic} 
    \small{
      \FOR{$t = 0 ,1, \ldots$} 
        \IF{perturbation condition holds}
          \FOR{$i = 1, \ldots, d$}
            \STATE $L_t^i \leftarrow \#\{s < t: x^i_s \leq x^i_t\}$ \\
            $R_t^i \leftarrow \#\{s < t: x^i_s > x^i_t\}$
            \STATE  $x^i_t  \leftarrow \left\{ \begin{array}{rcl}
    x^i_t - \frac{r}{\sqrt{d}} \, \mbox{Unif}(0,1) & \mbox{w.p.} & p, \\ 
    x^i_t + \frac{r}{\sqrt{d}} \, \mbox{Unif}(0,1) & \mbox{w.p.} & 1-p,
    \end{array}\right.$\\
    where $p = \frac{w(R^i_t)}{w(L^i_t) + w(R^i_t)}$
          \ENDFOR
        \ENDIF        
        \STATE $\pmb{x}_{t+1} \leftarrow \pmb{x}_t - \eta  \nabla f(\pmb{x}_t)$ 
      \ENDFOR
      }
    \end{algorithmic}
\end{algorithm}

\begin{algorithm}[ht]
\caption{\small{Perturbed Accelerated Gradient Descent Adapted to Occupation Time (Meta Algorithm)}}
\label{algo:PAGDOTMETA}
    \begin{algorithmic}
    \small{
    \FOR{$t = 0, 1, \ldots, $}
    \IF{perturbation condition holds}
    
          \FOR{$i = 1, \ldots, d$}
            \STATE $L_t^i \leftarrow \#\{s < t: x^i_s \leq x^i_t\}$ \\
            $R_t^i \leftarrow \#\{s < t: x^i_s > x^i_t\}$
            \STATE $x^i_t  \leftarrow \left\{ \begin{array}{rcl}
                x^{i}_t - \frac{r}{\sqrt{d}} \mbox{Unif}(0,1) & \mbox{w.p.}
                & p, \\ 
                x^{i}_t + \frac{r}{\sqrt{d}} \mbox{Unif}(0,1) & \mbox{w.p.} &  1-p,
            \end{array}\right.$
            where $p=\frac{w(R^i_t)}{w(L^i_t) + w(R^i_t)}$
          \ENDFOR
    \ENDIF
    \STATE $\pmb{x}_{t+1} \leftarrow$ Accelerate$(\pmb{x}_t, \pmb{v}_t)$, \quad $\pmb{v}_{t+1} \leftarrow \pmb{x}_{t+1}- \pmb{x}_t$
    \ENDFOR
    }
    \end{algorithmic}
\end{algorithm}

Algorithms \ref{algo:PGDOTMETA} and \ref{algo:PAGDOTMETA} are state-dependent adaptive algorithms, perturbing GD and accelerated gradient descent (AGD) \cite{Nesterov83} non-uniformly according to the history of states.


We will finally corroborate our theoretical analysis  by experimental results.
In particular, we will demonstrate that Algorithms \ref{algo:PGDOTMETA} and \ref{algo:PAGDOTMETA} escape saddle points faster than not only their counterparts, PGD and PAGD, but also several momentum methods such as Adam, AMSGrad, and RMSProp in training multilayer perceptrons (MLPs) on some well-studied datasets such as MNIST \cite{LBBH98} and CIFAR-10 \cite{KH09}.

{\bf Notations:} Below we collect the notations that will be used throughout this paper.
For $S$ a finite set, let $\# S$ denote the number of elements in $S$. 
For $D$ as a domain, let $\mbox{Unif}(D)$ be the uniform distribution on $D$, 
e.g., $\mbox{Unif}(0,1)$ is the uniform distribution on $[0,1]$.
For a function $f: \mathbb{R}^d \to \mathbb{R}$, 
let $\nabla f$ and $\nabla^2 f$ denote its gradient and Hessian, 
and $f^{\star}: = \min_{\pmb{x} \in \mathbb{R}^d} f(\pmb{x})$ denote its global minimum.
For $\pmb{A}$ a square matrix, let $\lambda_{\min}(\pmb{A})$ be its minimum eigenvalue.

The notation $||\cdot||$ is used for both the Euclidean norm of a vector and the spectral norm of a matrix.
For $\pmb{x} = (x^1, \ldots, x^d)$ and $r > 0$, 
let $B^d({\pmb x}, r): = \{\pmb{y}: ||\pmb{y} - \pmb{x} || \le r\}$ be the $d$-dimensional ball centered at $\pmb{x}$ with radius $r$, 
and $C^d({\pmb x}, r): = \{\pmb{y}: |y^i - x^i| \le r \mbox{ for } 1 \le i \le d\}$ be the $d$-dimensional hypercube centered at $\pmb{x}$ with distance $r$ to each of its surfaces.
We use the symbol $O(\cdot)$ to hide only absolute constants which do not depend on any problem parameter.

The rest of the paper is organized as follows.
Section \ref{s2}  provides background on the continuous optimization and recalls some existing results.
Section \ref{s3} presents the main results.
Section \ref{s4} contains numerical experiments to corroborate our analysis.
Section \ref{s5} concludes.
\section{Background and Existing Results}
\label{s2}

\subsection{Results of GD}

We consider non-convex optimization (convex optimization results are recalled in Appendix B).
In this case, it is generally difficult to find the global minima. 
A popular approach is to consider the first-order stationary points instead.

\begin{definition} 
\label{def:firststat}
Let $f: \mathbb{R}^d \to \mathbb{R}$ be a differentiable function. 
We say that 
$(i)$ $\pmb{x}$ is a first-order stationary point of $f$ if $\nabla f(x) =0$;
$(ii)$ $\pmb{x}$ is an $\epsilon$-first-order stationary point of $f$ if $|| \nabla f(x)|| \le \epsilon$.
\end{definition}

We say that a differentiable function $f : \mathbb{R}^d \to \mathbb{R}$ is $\ell$-gradient Lipschitz if 
$||\nabla f (\pmb{x}_1) - \nabla f (\pmb{x}_2)|| \le \ell || \pmb{x}_1 - \pmb{x}_2||$ for all \,$\pmb{x}_1, \pmb{x}_2 \in \mathbb{R}^d$.
For gradient Lipschitz functions, GD converges to the first-order stationary points, which is quantified by the following theorem from \cite{Nesterov04}[Section 1.2.3].

\begin{theorem} 
\label{thm:GDnonconvex}
Assume that $f: \mathbb{R}^d \to \mathbb{R}$ is $\ell$-gradient Lipschitz.
For any $\epsilon >0$, if we run GD with step size $\eta = \ell^{-1}$,
then the number of iterations to find an $\epsilon$-first-order stationary point is 
$\frac{\ell (f(\pmb{x}_0) - f^{\star})}{\epsilon^2}.$
\end{theorem}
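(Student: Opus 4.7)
The plan is to use the classical descent lemma for gradient Lipschitz functions and then a telescoping argument on the function values. This is a standard textbook calculation, so the main conceptual content is choosing the right step size to get the cleanest per-step decrease.

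First I would establish the quadratic upper bound (the ``descent lemma''): for any $\pmb{x}, \pmb{y} \in \mathbb{R}^d$, the assumption that $\nabla f$ is $\ell$-Lipschitz yields
\begin{equation*}
f(\pmb{y}) \le f(\pmb{x}) + \langle \nabla f(\pmb{x}), \pmb{y} - \pmb{x} \rangle + \frac{\ell}{2} \|\pmb{y} - \pmb{x}\|^2,
\end{equation*}
which follows from the fundamental theorem of calculus applied to $t \mapsto f(\pmb{x} + t(\pmb{y}-\pmb{x}))$ together with Cauchy--Schwarz and the Lipschitz bound on $\nabla f$.

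Next I would specialize to the GD update $\pmb{x}_{t+1} = \pmb{x}_t - \eta \nabla f(\pmb{x}_t)$ with $\eta = 1/\ell$. Plugging $\pmb{y} = \pmb{x}_{t+1}$ and $\pmb{x} = \pmb{x}_t$ into the descent lemma gives
\begin{equation*}
f(\pmb{x}_{t+1}) \le f(\pmb{x}_t) - \frac{1}{\ell}\|\nabla f(\pmb{x}_t)\|^2 + \frac{1}{2\ell}\|\nabla f(\pmb{x}_t)\|^2 = f(\pmb{x}_t) - \frac{1}{2\ell}\|\nabla f(\pmb{x}_t)\|^2.
\end{equation*}
This is the key per-step monotone decrease.

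Finally I would telescope. Summing the inequality from $t = 0$ to $T-1$ and using $f(\pmb{x}_T) \ge f^{\star}$ gives
\begin{equation*}
\sum_{t=0}^{T-1} \|\nabla f(\pmb{x}_t)\|^2 \le 2\ell\,(f(\pmb{x}_0) - f^{\star}).
\end{equation*}
If none of $\pmb{x}_0, \ldots, \pmb{x}_{T-1}$ is an $\epsilon$-first-order stationary point, then each summand exceeds $\epsilon^2$, so $T\epsilon^2 < 2\ell(f(\pmb{x}_0) - f^{\star})$; equivalently, within $T = \lceil 2\ell(f(\pmb{x}_0)-f^{\star})/\epsilon^2\rceil$ iterations one must encounter an iterate with $\|\nabla f(\pmb{x}_t)\| \le \epsilon$, matching the stated $\ell(f(\pmb{x}_0)-f^{\star})/\epsilon^2$ rate up to the absorbed absolute constant.

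There is really no obstacle here; the only subtlety is that the bound counts iterations until \emph{some} iterate is $\epsilon$-first-order stationary (not necessarily the last one), which is standard and follows immediately from the pigeonhole-style summation argument above.
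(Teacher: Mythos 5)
Your proposal is correct and is exactly the classical argument behind this theorem, which the paper does not reprove but simply cites from Nesterov's textbook: the descent lemma with $\eta = \ell^{-1}$ gives the per-step decrease $f(\pmb{x}_{t+1}) \le f(\pmb{x}_t) - \frac{1}{2\ell}\|\nabla f(\pmb{x}_t)\|^2$, and telescoping plus pigeonhole yields the stated iteration count up to the absolute constant (your factor $2$) that the paper's informal statement absorbs. Nothing further is needed.
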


Note that in Theorem \ref{thm:GDnonconvex}, the time complexity of GD is independent of the dimension $d$.
For a non-convex function, a first-order stationary point can be either a local minimum, a saddle point, or a local maximum. 
The following definition is taken from \cite{JG17}[Definition 4].

\begin{definition} 
\label{def:saddle}
Let $f: \mathbb{R}^d \to \mathbb{R}$ be a differentiable function. 
We say that 
$(i)$ $\pmb{x}$ is a local minimum if $\pmb{x}$ is a first-order stationary point, and $f(\pmb{x}) \le f(\pmb{y})$ for all $\pmb{y}$ in some neighborhood of $\pmb{x}$;
$(ii)$ $\pmb{x}$ is a saddle point if $\pmb{x}$ is a first-order stationary point but not a local minimum.
Assume further that $f$ is twice differentiable.
We say a saddle point $\pmb{x}$ is strict if $\lambda_{\min}(\nabla^2f(\pmb{x})) < 0$.
\end{definition}

For a twice differentiable function $f$, note that $\lambda_{\min}(\nabla^2 f(\pmb{x})) \le 0$ for any saddle point $\pmb{x}$. 
So by assuming a saddle point $\pmb{x}$ to be strict, we rule out the case $\lambda_{\min}(\nabla^2 f(\pmb{x})) = 0$.
The next subsection will review two perturbation-based algorithms that allow jumping out of strict saddle points.

\subsection{Results of PGD and PAGD}
\label{sc:22}
One drawback of GD in non-convex optimization is that it may get stuck at saddle points. 
\cite{JG17} and \cite{JNM18} proposed PGD and PAGD, respectively, to escape saddle points, which we review here.
To proceed further, we need some vocabulary regarding the Hessian of the function $f$.

\begin{definition}
\label{def:secondstat}
A twice differentiable function $f: \mathbb{R}^d \to \mathbb{R}$ is $\rho$-Hessian Lipschitz if
$|| \nabla^2 f(\pmb{x}_1) - \nabla^2 f(\pmb{x}_2) || \le \rho || \pmb{x}_1 - \pmb{x}_2 ||$ for all \,$\pmb{x}_1, \pmb{x}_2 \in \mathbb{R}^d$. 
Furthermore, we say that
$(i)$ $\pmb{x}$ is a second-order stationary point of $f$ if $\nabla f(\pmb{x}) = 0$ and $\lambda_{\min} (\nabla^2 f(\pmb{x})) \ge 0$;
$(ii)$ $\pmb{x}$ is a $\epsilon$-second-order stationary point of $f$ if 
$||\nabla f(\pmb{x}) || \le \epsilon$ and $\lambda_{\min} (\nabla^2 f(\pmb{x})) \ge - \sqrt{\rho \epsilon}$.
\end{definition}

To simplify the presentation, assume that all saddle points are strict (Definition \ref{def:saddle}). 
In this situation, all second-order stationary points are local minima.
The basic idea of these two algorithms is as follows.
Imagine that we are currently at an iterate $\pmb{x}_t$ which is not an $\epsilon$-second-order stationary point. 
There are two scenarios:
$(i)$ The gradient $|| \nabla f(\pmb{x}_t)||$ is large  and a usual iteration of GD or AGD is enough;
$(ii)$ The gradient $|| \nabla f(\pmb{x}_t)||$ is small but $\lambda_{\min}(\nabla^2 f(\pmb{x}_t)) \le - \sqrt{\rho \epsilon}$ (large negative).
So $\pmb{x}_t$ is around a saddle point, and a perturbation $\xi$ is needed to escape from the saddle region: $\widetilde{\pmb{x}}_{t} = \pmb{x}_t + \xi$.

The main result for PGD, Theorem 3 in \cite{JG17}, and for PAGD, Theorem 3 in \cite{JNM18}, are stated below showing that the time complexity of these two algorithms are almost dimension-free (with a log factor).
\begin{theorem} \cite{JG17}
\label{thm:PGD}
Assume that $f: \mathbb{R}^d \to \mathbb{R}$ is $\ell$-gradient Lipschitz and $\rho$-Hessian Lipschitz.
Then there exists $c_{\max} > 0$ such that for any $\delta > 0$, $\epsilon \le \ell^2/\rho$, $\Delta_f \geq f(\pmb{x}_0)-f^*$, and $c \le c_{\max}$, 
PGD outputs an $\epsilon$-second-order stationary point with probability $1 - \delta$, terminating within the following number of iterations:
$$
O\left(\frac{\ell(f(\pmb{x}_0) - f^{\star})}{\epsilon^2} \log^4 \left(\frac{d \ell \Delta_f}{\epsilon^2 \delta} \right)\right).
$$
\end{theorem}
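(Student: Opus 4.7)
The plan is to follow the descent-plus-escape decomposition of Jin--Ge--Netrapalli--Kakade--Jordan. I would partition the iterations according to whether we are in a ``large gradient'' regime or near a ``saddle region'' where only the Hessian condition is violated, and show that in both regimes the function value drops by a uniform amount per unit of time, so the number of iterations is controlled by $(f(\pmb{x}_0)-f^\star)/\mathscr{F}$ for an appropriate average drop $\mathscr{F}$.

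In the large-gradient regime, one step of GD with $\eta=1/\ell$ combined with the $\ell$-gradient Lipschitz descent inequality gives $f(\pmb{x}_{t+1}) \le f(\pmb{x}_t) - \frac{1}{2\ell}\|\nabla f(\pmb{x}_t)\|^2$. Whenever $\|\nabla f(\pmb{x}_t)\| > \epsilon$, the drop is at least $\epsilon^2/(2\ell)$, so no more than $O(\ell(f(\pmb{x}_0)-f^\star)/\epsilon^2)$ such steps can occur. In the small-gradient/negative-curvature regime $\{\|\nabla f(\pmb{x}_t)\|\le\epsilon,\ \lambda_{\min}(\nabla^2 f(\pmb{x}_t)) < -\sqrt{\rho\epsilon}\}$, the perturbation condition triggers: one adds uniform noise in $B^d(\pmb{0},r)$ and runs GD for a burst of $\mathscr{T}$ steps, and the goal is to prove that with probability $1-\delta'$ the function value decreases by a fixed amount $\mathscr{F}=\tilde{\Omega}(\sqrt{\epsilon^3/\rho})$.

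The hard part is this escape statement, and I would treat it via the two-point coupling / ``stuck region'' argument. Consider two GD trajectories $\{\pmb{x}_s\},\{\pmb{x}_s'\}$ initialized at points that differ only by a small displacement $r_0\pmb{e}_1$ along the minimum-eigenvector direction $\pmb{e}_1$ of $\nabla^2 f(\pmb{x}_t)$. Using the $\rho$-Hessian Lipschitz property to control the Hessian along the trajectory, one shows that the component of $\pmb{x}_s-\pmb{x}_s'$ along $\pmb{e}_1$ grows roughly like $(1+\eta\sqrt{\rho\epsilon})^s$, so after $\mathscr{T}=\tilde{O}(1/(\eta\sqrt{\rho\epsilon}))$ steps the two trajectories are macroscopically far apart and at least one must have escaped the saddle region and achieved the promised drop $\mathscr{F}$. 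This means that the set of perturbations that fail to escape has ``width'' at most $r_0$ along $\pmb{e}_1$ inside $B^d(\pmb{0},r)$, hence volume ratio at most $r_0/r$. Setting the parameters $\eta,r,r_0,\mathscr{T},\mathscr{F}$ so that $r_0/r \le \delta'$ is the delicate balance; the various polylog factors on $d,\ell,\Delta_f,\epsilon^{-1},\delta^{-1}$ enter here and accumulate to the claimed $\log^4$ term.

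Finally I would assemble the pieces: the number of perturbation bursts is at most $N := (f(\pmb{x}_0)-f^\star)/\mathscr{F}$, and each contributes $\mathscr{T}$ GD steps; summing with the large-gradient count and choosing $\delta' = \delta/N$ in a union bound over bursts gives success probability $1-\delta$. After substitution of the scales $\mathscr{F}=\tilde{\Omega}(\epsilon^{3/2}/\sqrt{\rho})$ and $\mathscr{T}=\tilde{O}(\ell/\sqrt{\rho\epsilon})$, the total iteration count simplifies to $O\!\bigl(\ell(f(\pmb{x}_0)-f^\star)\epsilon^{-2}\log^4(d\ell\Delta_f/(\epsilon^2\delta))\bigr)$, matching the claim. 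The only genuinely nontrivial step is the coupling lemma; everything else is bookkeeping.
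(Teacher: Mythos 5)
Your proposal reproduces, at the right level of detail, exactly the argument this theorem rests on: the paper treats it as a cited result of \cite{JG17}, and the proof sketch it gives in Appendix D (the large-gradient descent lemma, the perturbation-plus-$t_{\tiny \mbox{thres}}$-step escape lemma, and the thin ``stuck region'' controlled by the two-trajectory coupling along the minimum-eigenvector direction) is the same descent-plus-escape decomposition you describe, including the union bound over bursts and the parameter bookkeeping that yields the $\log^4$ factor. Your reconstruction is correct and takes essentially the same route, so no further comparison is needed.
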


Compared with Theorem \ref{thm:GDnonconvex}, PGD takes almost the same order of time to find a second-order stationary point as GD does to find a first-order stationary point.

\begin{theorem} \cite{JNM18}
\label{thm:PAGD}
Assume that $f: \mathbb{R}^d \to \mathbb{R}$ is $\ell$-gradient Lipschitz and $\rho$-Hessian Lipschitz.
Then there exists an absolute constant $c_{\max} > 0$ such that for any $\delta > 0$, $\epsilon \le \ell^2/\rho$, $\Delta_f \geq f(\pmb{x}_0)-f^*$, and $c \ge c_{\max}$, with probability $1-\delta$, one of the iterates $\pmb{x}_t$ of PAGD will be an $\epsilon$-second-order stationry point in the following number of iterations:
$$
O\left(\frac{\ell^{1/2}\rho^{1/4}(f(\pmb{x}_0) - f^{\star})}{\epsilon^{7/4}} \log^6 \left(\frac{d \ell \Delta_f}{\rho\epsilon \delta} \right)\right).
$$
\end{theorem}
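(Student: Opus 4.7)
The plan is to follow the Hamiltonian framework of Jin, Netrapalli, and Jordan, adapting Nesterov's accelerated-gradient analysis to the non-convex setting. I would introduce the discrete Hamiltonian $E_t := f(\pmb{x}_t) + \tfrac{1}{2\eta}\|\pmb{v}_t\|^2$, which serves as a Lyapunov function so long as AGD is operating in a locally convex-like region. For $\ell$-gradient-Lipschitz and $\rho$-Hessian-Lipschitz $f$, a careful one-step computation using AGD's momentum parameter $\theta \asymp (\eta\sqrt{\rho\epsilon})^{1/2}$ gives a per-step estimate of roughly $E_{t+1} \leq E_t - \tfrac{\eta}{2}\|\nabla f(\pmb{x}_t)\|^2$, up to an error driven by any accumulated negative curvature along the trajectory.

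Next, I would split the analysis into time windows of length $\utime = \widetilde{\Theta}(\epsilon^{-1/4}\rho^{-1/4}\ell^{1/2})$ and handle two sub-regimes. In the large-gradient regime where $\|\nabla f(\pmb{x}_t)\|$ stays above $\epsilon$, summing the monovariant estimate across a window yields Hamiltonian decrease of order $\widetilde{\Theta}(\epsilon^{7/4}/(\rho^{1/4}\ell^{1/2}))$, which is the accelerated rate. If the Hamiltonian fails to decrease by that amount, this signals nontrivial negative curvature, and a Negative-Curvature-Exploitation (NCE) step is triggered: reset $\pmb{v}_t$ to zero (or take a small step along $\pm \pmb{v}_t$), and argue that the bad window still produces the required function decrease. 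This amortization delivers a uniform per-episode decrease across both sub-regimes.

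Third, around a saddle region where $\|\nabla f(\pmb{x}_t)\| \leq \epsilon$ and $\lambda_{\min}(\nabla^2 f(\pmb{x}_t)) \leq -\sqrt{\rho\epsilon}$, the perturbation condition activates and the key tool is an improve-or-localize coupling argument. Consider two PAGD runs started from $\pmb{x}_t$ and $\pmb{x}_t + r\pmb{e}_1$, where $\pmb{e}_1$ aligns with the smallest eigenvector of $\nabla^2 f(\pmb{x}_t)$. Linearizing, the separation is driven by a matrix whose spectral radius grows like $(1+\widetilde{\Theta}(\sqrt{\eta\rho\epsilon}))^t$, so after $\utime$ steps the separation reaches $\Omega(1)$ and at most one trajectory can remain stuck. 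A volume estimate over the perturbation set (here the coordinate-wise uniform perturbation of Algorithm \ref{algo:PAGDOTMETA} reduces to comparing a hypercube against a thin stuck region) gives escape with probability $1-\delta$, while the Hamiltonian drops by $\widetilde{\Theta}(\sqrt{\epsilon^3/\rho})$.

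Finally, the total iteration count follows by dividing the initial gap $\Delta_f$ by the per-window decrease and multiplying by the polylogarithmic factor $\log^6(d\ell\Delta_f/(\rho\epsilon\delta))$ arising from the choice of perturbation radius, the high-probability bounds, and a union bound over at most $O(\Delta_f \rho^{1/4}\ell^{1/2}/\epsilon^{7/4})$ escape attempts. The main obstacle, and the most delicate piece of the argument, is the coupling step at saddle regions: momentum couples coordinates across time, so bounding the Hessian-Lipschitz error accumulated over the $\widetilde{\Theta}(1/\sqrt{\rho\epsilon})$-step escape window requires simultaneously controlling the trajectories' distance from $\pmb{x}_t$ (they must remain in an $\widetilde{O}(\sqrt{\epsilon/\rho})$ neighborhood while neither has escaped) and showing that the nonlinear corrections do not swamp the exponentially growing linear mode along $\pmb{e}_1$.
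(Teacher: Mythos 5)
Theorem \ref{thm:PAGD} is not proved in this paper at all: it is quoted directly from \cite{JNM18}, and your sketch is essentially that source's argument --- the Hamiltonian $E_t = f(\pmb{x}_t) + \tfrac{1}{2\eta}\|\pmb{v}_t\|^2$ as Lyapunov function, improve-or-localize over windows of length $\utime = \widetilde{\Theta}(\sqrt{\kappa})$, negative-curvature exploitation to handle the ``too nonconvex'' windows, and the two-trajectory coupling plus thin-stuck-set volume bound to certify escape from strict saddle regions, with the final count $\Delta_f$ divided by the per-window decrease. Two minor slips worth flagging: the cited PAGD result uses a perturbation drawn uniformly from a ball, so the volume comparison there is ball-versus-pancake --- the coordinate-wise hypercube mechanism of Algorithm \ref{algo:PAGDOTMETA} that you invoke is precisely what this paper adds later for PAGDOT (Appendix D), not part of the quoted theorem; and in \cite{JNM18} the NCE step is triggered by the explicit certificate $f(\pmb{x}_t) \le f(\pmb{y}_t) + \langle \nabla f(\pmb{y}_t), \pmb{x}_t - \pmb{y}_t\rangle - \tfrac{\gamma}{2}\|\pmb{x}_t - \pmb{y}_t\|^2$ rather than by an observed failure of the Hamiltonian to decrease, although both routes yield the same amortized per-window decrease $\widetilde{\Theta}(\sqrt{\epsilon^3/\rho})$ and hence the stated $\widetilde{O}(\ell^{1/2}\rho^{1/4}\Delta_f\,\epsilon^{-7/4})$ iteration bound.
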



\section{Main Results}
\label{s3}
In this section, we first prove the non-localization property of the vertex-repelling random walk. Then, we formalize the idea of perturbations adapted to occupation time and provide the full version of PGDOT and PAGDOT in Algorithms \ref{algo:PGDOT} and \ref{algo:PAGDOT}, respectively. 
Our main results show that these algorithms converge rapidly to second-order stationary points.

\subsection{Non-Localization Property of Vertex-Repelling Random Walk}

The following theorem suggests that the new perturbation mechanism helps perturbation-based algorithms to avoid getting stuck at saddle points, as  the dynamics of  vertex-repelling random walk prescribed in \eqref{eq:1Dreverse} does not localize.

\begin{theorem}
\label{thm:nolocal}
Let $\{Z_t, \, t = 0, 1, \ldots\}$ be the vertex-repelling random walk defined by \eqref{eq:vrrw}, where
$w: \{0, 1, \ldots\} \rightarrow (0, \infty)$ is an increasing function such that $w(n) \rightarrow \infty$ as $n \rightarrow \infty$.
Then 
\begin{equation*}
\mathbb{P}\left(\exists t_0>0, \, k \le \ell: Z_t \in \{k, \ldots, \ell\} \mbox{ for all } t \ge t_0   \right) = 0.
\end{equation*}
\end{theorem}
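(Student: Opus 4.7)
The plan is to reduce the theorem to a countable collection of zero-probability events and then dispatch each one via a conditional Borel--Cantelli argument applied at the leftmost infinitely-visited vertex. On the event that the walk is eventually confined to $\{k,\dots,\ell\}$, some integer $j\in\{k,\dots,\ell\}$ must be visited infinitely often, and we may take $j$ to be the smallest such integer, so that the vertex $j-1$ is visited only finitely many times. Hence the event of interest is contained in $\bigcup_{j\in\mathbb{Z}}\bigcup_{M\in\mathbb{N}_0} A_{j,M}$, where
\begin{equation*}
A_{j,M}:=\{N_{\infty}(j)=\infty\}\cap\{N_{\infty}(j-1)=M\},\qquad N_{\infty}(x):=\#\{s\ge 0:Z_s=x\}.
\end{equation*}
By countable additivity, it suffices to prove $\mathbb{P}(A_{j,M})=0$ for each fixed $j,M$.

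Fix $j$ and $M$, let $\sigma$ be the (a.s.\ finite on $A_{j,M}$) time of the $M$-th visit to $j-1$ (with $\sigma=0$ if $M=0$), and let $\tau_1<\tau_2<\cdots$ enumerate the visit times to $j$. The key observation is that on $A_{j,M}$, no transition from $j$ to $j-1$ occurs after $\sigma$ (else $j-1$ would be visited more than $M$ times), so every visit $\tau_n$ with $\tau_n>\sigma$ is followed by a step to $j+1$. Consequently the occupation count $N_{\tau_n}(j+1)$ tends to infinity along $A_{j,M}$, while $N_{\tau_n}(j-1)$ is pinned at $M$. Writing $A_n:=\{Z_{\tau_n+1}=j-1\}$ and denoting the natural filtration by $(\mathcal{F}_t)$, the defining dynamics \eqref{eq:vrrw} give
\begin{equation*}
\mathbb{P}(A_n\mid \mathcal{F}_{\tau_n})=\frac{w(N_{\tau_n}(j+1))}{w(N_{\tau_n}(j-1))+w(N_{\tau_n}(j+1))}\xrightarrow[n\to\infty]{}1\quad\text{on }A_{j,M},
\end{equation*}
because $w$ is increasing with $w(n)\to\infty$, while $w(N_{\tau_n}(j-1))=w(M)$ stays bounded.

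Now I would invoke Lévy's conditional Borel--Cantelli lemma for the adapted sequence $(A_n)$: almost surely,
\begin{equation*}
\{A_n\text{ i.o.}\}=\Bigl\{\sum_{n}\mathbb{P}(A_n\mid\mathcal{F}_{\tau_n})=\infty\Bigr\}.
\end{equation*}
On $A_{j,M}$ the conditional probabilities tend to $1$ so their sum diverges, forcing $A_n$ to occur infinitely often almost surely on $A_{j,M}$. But every occurrence of $A_n$ is a fresh visit to $j-1$, and $A_{j,M}$ allows only $M<\infty$ such visits in total. This contradiction yields $\mathbb{P}(A_{j,M})=0$, completing the proof.

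\textbf{Main obstacle.} The only subtle point is making the conditional Borel--Cantelli step rigorous in the presence of the random indexing by $(\tau_n)$: one must verify that the $\tau_n$ are stopping times that are finite on the event of interest, and that the formula for $\mathbb{P}(A_n\mid\mathcal{F}_{\tau_n})$ is valid as a regular conditional probability on the non-null event where the walk has not yet absorbed. This is standard but deserves a careful line. Everything else reduces to reading off that $w$ is increasing and unbounded, which is exactly the hypothesis on $w$.
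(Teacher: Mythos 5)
Your proof is correct, and at its heart it exploits the same mechanism as the paper's argument in Appendix C: at the left boundary of any would-be trapping region, the occupation count of the right neighbour blows up while that of the left neighbour stays bounded, so the dynamics \eqref{eq:vrrw} force the probability of the step the walk is obliged to take (to the right) to vanish. Your formalization, however, is genuinely different and tighter than the paper's. The paper works directly with the left endpoint $k$ of the interval and bounds the localization probability by the informal product $\prod_{n>0} C/w(n)$; this leaves implicit why $k$ (rather than some interior site) is visited infinitely often, why $k+1$ accumulates roughly $n$ visits by the $n$-th visit to $k$, and why the conditional probabilities --- whose ``constant'' $C$ depends on the random, finite number of visits to $k-1$ --- may simply be multiplied out. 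You repair all three points at once: decomposing over the leftmost infinitely visited site $j$ and the exact number $M$ of visits to $j-1$ freezes the constant $w(M)$ inside each event $A_{j,M}$; the observation that every visit to $j$ after $\sigma$ must be followed by a step to $j+1$ yields $N_{\tau_n}(j+1)\to\infty$ on $A_{j,M}$; and L\'evy's conditional Borel--Cantelli lemma along the stopped filtration $(\mathcal{F}_{\tau_n})$ replaces the product estimate, converting ``the forced move has vanishing probability'' into the contradiction that $j-1$ would be visited infinitely often. The price is the routine bookkeeping you flag (validity of the transition formula at the stopping times $\tau_n$, finiteness of the $\tau_n$ on the event), and the payoff is a conclusion slightly stronger than the theorem itself: almost surely, no site can be visited infinitely often while its left neighbour is visited only finitely often.
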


The proof of this theorem is given in Appendix C.

\subsection{Perturbed Gradient Descent Adapted to Occupation Time}
PGD adds a uniform random perturbation when stuck at saddle points.
From the discussion in the introduction, it is more reasonable to perturb with non-uniform noise whose distribution depends on the occupation times.
Recall that  $w: \{0,1, \ldots\} \to (0, \infty)$ is an increasing weight function on the nonnegative integers.
The following algorithm adapts PGD to random perturbation depending on the occupation dynamics.
We follow the parameter setting as in \cite{JG17}.
Our algorithm performs GD with step size $\eta$ and gets a perturbation of amplitude $\frac{r}{\sqrt{d}}$ near saddle points at most once every $t_{\tiny \mbox{thres}}$ iterations. 
The threshold $t_{\tiny \mbox{thres}}$ ensures that the dynamics of the algorithm is mostly GD. 
The threshold $g_{\tiny \mbox{thres}}$ determines if a perturbation is needed, and the threshold $f_{\tiny \mbox{thres}}$ decides when the algorithm terminates.

\begin{algorithm}
  \caption{\small{Perturbed Gradient Descent Adapted to Occupation Time: PGDOT($\pmb{x}_0, \ell, \rho, \epsilon, c, \delta, \Delta_f)$}}
  \label{algo:PGDOT}
  \begin{algorithmic}
  \small{
      \STATE $\chi \leftarrow 3 \max\left\{\log(\frac{d \ell \Delta_f}{c \epsilon^2 \delta}), 4\right\}$, \,$\eta \leftarrow \frac{c}{\ell}$, \,$r \leftarrow \frac{\epsilon\sqrt{c}}{\chi^2 \ell}$
      \STATE $g_{\tiny \mbox{thres}} \leftarrow \frac{\epsilon \sqrt{c}}{\chi^2}$, \,$f_{\tiny \mbox{thres}} \leftarrow \frac{c}{\chi^3} \sqrt{\frac{\epsilon^3}{\rho}}$, \,$t_{\tiny \mbox{thres}} \leftarrow \frac{\chi \ell}{c^2 \sqrt{\rho \epsilon}}$    
      \STATE $t_{\tiny \mbox{noise}} \leftarrow - t_{\tiny \mbox{thres}} - 1$                
      \FOR{$t = 0 ,1, \ldots$} 
        \IF{$|| \nabla f(\pmb{x}_t)|| \le g_{\tiny \mbox{thres}}$ and $t - t_{\tiny \mbox{noise}} > t_{\tiny \mbox{thres}}$}
          \STATE $\widetilde{\pmb{x}}_{t} \leftarrow \pmb{x}_t$, \,$t_{\tiny \mbox{noise}} \leftarrow t$ 
          \FOR{$i = 1, \ldots, d$}
            \STATE $L_t^i \leftarrow \#\{s < t: x^i_s \leq x^i_t\}$ \\
            $R_t^i \leftarrow \#\{s < t: x^i_s > x^i_t\}$
            \STATE $x^i_t  \leftarrow \left\{ \begin{array}{rcl}
\widetilde{x}^{i}_t - \frac{r}{\sqrt{d}} \mbox{Unif}(0,1) & \mbox{w.p.}
& p, \\ 
\widetilde{x}^{i}_t + \frac{r}{\sqrt{d}} \mbox{Unif}(0,1) & \mbox{w.p.} &  1-p,
\end{array}\right.$
where $p=\frac{w(R^i_t)}{w(L^i_t) + w(R^i_t)}$
          \ENDFOR
        \ENDIF
        \IF{$t - t_{\tiny \mbox{noise}} = t_{\tiny \mbox{thres}}$ and $f(\pmb{x}_t) - f(\widetilde{\pmb{x}}_{t_{\tiny \mbox{noise}}}) > - f_{\tiny \mbox{thres}}$}
        \STATE {\bf return} $\widetilde{\pmb{x}}_{t_{\tiny \mbox{noise}}}$
        \ENDIF
        \STATE $\pmb{x}_{t+1} \leftarrow \pmb{x}_t - \eta  \nabla f(\pmb{x}_t)$ 
      \ENDFOR
      }
  \end{algorithmic}
\end{algorithm}

The next theorem gives the convergence rate of Algorithm \ref{algo:PGDOT}: PGDOT finds a second-order stationary point in the same number of iterations (up to a constant factor) as PGD does.

\begin{theorem} 
\label{thm:PGDOT}
Assume that $f: \mathbb{R}^d \to \mathbb{R}$ is $\ell$-gradient Lipschitz and $\rho$-Hessian Lipschitz.
Then there exists $c_{\max} > 0$ such that for any $\delta > 0$, $\epsilon \le \ell^2/\rho$, $\Delta_f \geq f(\pmb{x}_0)-f^*$, and $c \le c_{\max}$, PGDOT (Algorithm \ref{algo:PGDOT}) outputs an $\epsilon$-second-order stationary point with probability $1 - \delta$ terminating within the following  number of iterations:
$$
O\left(\frac{\ell(f(\pmb{x}_0) - f^{\star})}{\epsilon^2} \log^4 \left(\frac{d \ell \Delta_f}{\epsilon^2 \delta} \right)\right).
$$
\end{theorem}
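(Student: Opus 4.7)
The plan is to follow the proof of Theorem \ref{thm:PGD} in \cite{JG17} and modify only the single probabilistic step where isotropy of the perturbation enters, namely the bound on the measure of the ``stuck set'' of perturbations near a strict saddle. In the regime $\|\nabla f(\pmb{x}_t)\| > g_{\tiny \mbox{thres}}$, PGDOT performs plain GD steps and the standard descent lemma yields a per-step decrease of $\Omega(\eta g_{\tiny \mbox{thres}}^2)$, exactly as in \cite{JG17}. It therefore remains to establish the saddle-escape guarantee: conditionally on triggering a perturbation, $f$ drops by at least $f_{\tiny \mbox{thres}}$ within the next $t_{\tiny \mbox{thres}}$ iterations with probability $1 - O(\delta)$. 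Once this is in place, the overall iteration count follows from a union bound over at most $O((f(\pmb{x}_0) - f^{\star})/f_{\tiny \mbox{thres}})$ perturbation events and reproduces the bound of Theorem \ref{thm:PGD} up to constants.

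For the saddle-escape guarantee I would invoke the coupling lemma of \cite{JG17}, which is purely deterministic: at a point $\widetilde{\pmb{x}}$ near a strict saddle with minimum-Hessian direction $\pmb{e}_1$, if two perturbations $\xi$ and $\xi'$ agree off the $\pmb{e}_1$-axis and differ there by at least some $\mu > 0$, then GD launched from at least one of $\widetilde{\pmb{x}} + \xi$ and $\widetilde{\pmb{x}} + \xi'$ escapes within $t_{\tiny \mbox{thres}}$ iterations and drops $f$ by $f_{\tiny \mbox{thres}}$. Consequently the set $\mathcal{X}_{\tiny \mbox{stuck}}$ of perturbations for which escape fails is contained in a slab $\{\xi : |\pmb{e}_1 \cdot \xi - c_0| \le \mu\}$ for some $c_0 \in \mathbb{R}$; since this slab inclusion is purely geometric, it transfers unchanged from PGD and imposes no constraint on the perturbation law.

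The key new step is bounding $\mathbb{P}(\xi \in \mathcal{X}_{\tiny \mbox{stuck}})$ under the PGDOT product law, where each coordinate $\xi^i$ is supported on $[-r/\sqrt{d}, r/\sqrt{d}]$ with density equal to $p_i \sqrt{d}/r$ on the negative half and $(1-p_i)\sqrt{d}/r$ on the positive half, hence bounded by $\sqrt{d}/r$ uniformly in $p_i \in (0,1)$. Since $\|\pmb{e}_1\| = 1$, there exists an index $i^{\star}$ with $|e_{1, i^{\star}}| \ge 1/\sqrt{d}$. Conditioning on $\{\xi^j\}_{j \neq i^{\star}}$, the quantity $\pmb{e}_1 \cdot \xi$ is affine in $\xi^{i^{\star}}$ with slope $e_{1, i^{\star}}$, so its conditional density is bounded by $(\sqrt{d}/r)/|e_{1, i^{\star}}| \le d/r$, and integration over the slab gives $\mathbb{P}(\xi \in \mathcal{X}_{\tiny \mbox{stuck}}) \le 2 d \mu / r$. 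Taking $\mu = \Theta(\delta r / d)$ then delivers the desired $O(\delta)$ bound, at the cost of an extra $\log d$ factor which is harmlessly absorbed into $\chi$ and leaves the leading-order complexity unchanged.

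The step I expect to be the most delicate is precisely this last measure bound: the biases $p_i$ are computed from the entire history of iterates through $L^i_t$ and $R^i_t$, so the marginal law of $\xi$ is genuinely non-uniform and may be arbitrarily asymmetric. What rescues the anti-concentration estimate is that the per-coordinate density bound $\sqrt{d}/r$ and the reciprocal $|e_{1, i^{\star}}|^{-1} \le \sqrt{d}$ both hold \emph{uniformly} in $p_i$, so the argument is robust to however aggressive the occupation-time adaptation becomes. The remaining ingredients of the proof (descent in the large-gradient regime, accounting for the number of perturbation events, and the termination criterion via $f_{\tiny \mbox{thres}}$) are inherited verbatim from \cite{JG17}.
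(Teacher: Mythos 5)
Your overall plan coincides with the paper's: keep the PGD machinery of \cite{JG17} (descent in the large-gradient regime, the union bound over at most $O((f(\pmb{x}_0)-f^\star)/f_{\tiny \mbox{thres}})$ perturbation episodes, the termination rule), and modify only the bound on the probability of the stuck set under the new, occupation-time-adapted perturbation law. The gap is in precisely that new step. The coupling lemma of \cite{JG17} (Lemma \ref{lem:pancake} here) is a \emph{fiber-wise} statement: if $\pmb{u}-\pmb{v}=\mu r\pmb{e}_1$ with $\mu$ above the threshold, not both are stuck. This only says that along every line parallel to $\pmb{e}_1$ the stuck set has small extent; it does \emph{not} imply your claim that $\mathcal{X}_{\tiny \mbox{stuck}}$ is contained in a single flat slab $\{\xi:|\pmb{e}_1\cdot\xi-c_0|\le\mu\}$ --- the pancake may be curved (the paper explicitly calls it a ``non-flat thin pancake''). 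Your anti-concentration step then conditions on $\{\xi^j\}_{j\ne i^\star}$ and varies $\xi^{i^\star}$, i.e.\ moves along the coordinate direction $\pmb{e}_{i^\star}$, a direction in which the pancake need not be thin: a set that is thin along $\pmb{e}_1$-fibers can contain entire segments of lines in the $\pmb{e}_{i^\star}$ direction (take $\{\xi:|\pmb{e}_1\cdot\xi-g(P_{\pmb{e}_1^\perp}\xi)|\le\mu\}$ with $g$ chosen so that the graph tilts along $\pmb{e}_{i^\star}$). So the bound $\mathbb{P}(\xi\in\mathcal{X}_{\tiny \mbox{stuck}})\le 2d\mu/r$ does not follow from the ingredients you invoke, and this is exactly the place where the non-uniform law has to be confronted.

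The paper closes this step differently, and correctly: it exploits the fact that, conditional on the sign pattern of the $d$ coordinate perturbations, the PGDOT perturbation is \emph{uniform} on the corresponding orthant piece of the cube $C^{(d)}(\widetilde{\pmb{x}},r/\sqrt{d})$. Writing $\mathbb{P}(\chi_{\tiny \mbox{stuck}})=\sum_{i=1}^{2^d}p_i\,\mbox{Vol}(\chi_{\tiny \mbox{stuck}}\cap\mathcal{O}_i)/\mbox{Vol}(C^{(d)}(\widetilde{\pmb{x}},r/\sqrt{d})\cap\mathcal{O}_i)$, it bounds each volume ratio by $\sqrt{2}\,\delta$ using the fiber-wise thinness of Lemma \ref{lem:pancake} together with the cube-slicing bound of \cite{Ball86}; since $\sum_i p_i=1$, this gives $\mathbb{P}(\chi_{\tiny \mbox{stuck}})\le\sqrt{2}\,\delta$ regardless of how biased the $p_i$'s are, with no dimension loss (your route, even granting the slab, incurs an extra $\sqrt{d}$ or $d$ factor that must be absorbed into $\chi$). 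To repair your argument you would either need to prove a genuinely flat-slab version of the escape lemma, or replace the single-coordinate conditioning by a Fubini integration along $\pmb{e}_1$-fibers under the product law --- which, after conditioning on the orthant, is exactly the paper's proof.
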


The proof of Theorem \ref{thm:PGDOT} is based on a geometric characterization of saddle points -- thin pancake property \cite{JG17}. 
In Appendix D, we will discuss this property, and show how it is used to prove Theorem \ref{thm:PGDOT}.

\subsection{Perturbed Accelerated Gradient Descent Adapted to Occupation Time}
Similar to the way we combined our perturbation mechanism with PGD, we can  adapt PAGD to this mechanism as well resulting in the accelerated version of PGDOT (Algorithm \ref{algo:PAGDOT}). We follow the parameter setting as in \cite{JNM18}.

\begin{algorithm}[ht]
\caption{\small{Perturbed Accelerated Gradient Descent Adapted to Occupation Time: PAGDOT($\pmb{x}_0, \eta, \theta, \gamma, s, r, \utime$)}}
\label{algo:PAGDOT}
    \begin{algorithmic}
    \small{
    \STATE $\chi \leftarrow  \max\left\{\log(\frac{d \ell \Delta_f}{\rho \epsilon \delta}), 1\right\}$, \, $\kappa \leftarrow \frac{\ell}{\sqrt{\rho\epsilon}}$, \,$\eta \leftarrow \frac{1}{4\ell}$
    \STATE $\theta \leftarrow \frac{1}{4\sqrt{\kappa}}$, \, $\gamma \leftarrow \frac{\theta^2}{\eta}$, \, $s \leftarrow \frac{\gamma}{4\rho}$, \,
    $r \leftarrow \frac{\eta\epsilon}{\chi^5 c^8}$, \, $\utime \leftarrow \chi c \sqrt{\kappa}$
    \STATE $\pmb{v}_0 \leftarrow 0$
    \FOR{$t = 0, 1, \ldots, $}
    \IF{$\|{\nabla f(\pmb{x}_t)}\| \le \epsilon$ and \emph{no perturbation in last $\utime$ steps}}
    
    \STATE $\widetilde{\pmb{x}}_{t} \leftarrow \pmb{x}_t$ 
          \FOR{$i = 1, \ldots, d$}
            \STATE $L_t^i \leftarrow \#\{s < t: x^i_s \leq x^i_t\}$ \\
            $R_t^i \leftarrow \#\{s < t: x^i_s > x^i_t\}$
            \STATE $x^i_t  \leftarrow \left\{ \begin{array}{rcl}
                \widetilde{x}^{i}_t - \frac{r}{\sqrt{d}} \mbox{Unif}(0,1) & \mbox{w.p.}
                & p, \\ 
                \widetilde{x}^{i}_t + \frac{r}{\sqrt{d}} \mbox{Unif}(0,1) & \mbox{w.p.} &  1-p,
            \end{array}\right.$
            where $p=\frac{w(R^i_t)}{w(L^i_t) + w(R^i_t)}$
          \ENDFOR
    \ENDIF
    \STATE $\pmb{y}_{t} \leftarrow \pmb{x}_{t} + (1-\theta) \pmb{v}_t$
    \STATE $\pmb{x}_{t+1} \leftarrow \pmb{y}_t - \eta \nabla f (\pmb{y}_t)$
    \STATE $\pmb{v}_{t+1} \leftarrow \pmb{x}_{t+1} - \pmb{x}_t $
    \IF{$f(\pmb{x}_t) \le  f(\pmb{y}_t) + \langle \nabla f(\pmb{y}_t), \pmb{x}_t - \pmb{y}_t \rangle - \frac{\gamma}{2} \|{\pmb{x}_t - \pmb{y}_t}\|^2$}
    \STATE $(\pmb{x}_{t+1}, \pmb{v}_{t+1}) \leftarrow $ \text{NCE($\pmb{x}_t, \pmb{v}_t, s$)}
    \ENDIF
    \ENDFOR
    }
    \end{algorithmic}
\end{algorithm}
Algorithm \ref{algo:PAGDOT}, similar to PAGD, enjoys a feature enabling it to reset the momentum and decide whether to exploit the negative curvature when the function becomes ``too convex" (see Algorithm \ref{algo:NCE}).

\begin{algorithm}[ht]
\caption{\small{Negative Curvature Exploitation: NCE($\pmb{x}_t, \pmb{v}_t, s$)}}
\label{algo:NCE}
    \begin{algorithmic}
    \small{
    \IF{$\|{\pmb{v}_t}\| \geq s$}
     \STATE $\pmb{x}_{t+1} \leftarrow \pmb{x}_t$
    \ELSE
    \STATE $\delta = s.\pmb{v}_t / \|{\pmb{v}_t}\|$ 
    \STATE $\pmb{x}_{t+1} \leftarrow \arg \min_{\pmb{x} \in \{\pmb{x}_t+\delta, \pmb{x}_t-\delta\}} f(\pmb{x})$
    \ENDIF
    \STATE \textbf{return} $(\pmb{x}_{t+1}, 0)$
    }
    \end{algorithmic}
\end{algorithm}

The next theorem gives the convergence rate of Algorithm \ref{algo:PAGDOT}: PAGDOT finds a second-order stationary point in the same number of iterations (up to a constant factor) as PAGD does, and therefore achieves a faster convergence rate than PGD and PGDOT. The proof of Theorem \ref{thm:PAGDOT} is similar to that of Theorem \ref{thm:PGDOT} (see Appendix D).

\begin{theorem} 
\label{thm:PAGDOT}
Assume that $f: \mathbb{R}^d \to \mathbb{R}$ is $\ell$-gradient Lipschitz and $\rho$-Hessian Lipschitz.
Then there exists an absolute constant $c_{\max} > 0$ such that for any $\delta > 0$, $\epsilon \le \ell^2/\rho$, $\Delta_f \geq f(\pmb{x}_0)-f^*$, and $c \ge c_{\max}$, one of the iterates $\pmb{x}_t$ of PAGDOT (Algorithm \ref{algo:PAGDOT}) will be an $\epsilon$-second-order stationry point in the following number of iterations, with probability $1-\delta$:
$$
O\left(\frac{\ell^{1/2}\rho^{1/4}(f(\pmb{x}_0) - f^{\star})}{\epsilon^{7/4}} \log^6 \left(\frac{d \ell \Delta_f}{\rho\epsilon \delta} \right)\right).
$$
\end{theorem}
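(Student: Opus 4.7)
The plan is to mirror the PAGD analysis in \cite{JNM18} step by step, isolating and redoing only the piece that depends on the law of the perturbation. All deterministic ingredients---Hamiltonian monotonicity for $E_t := f(\pmb{x}_t) + \tfrac{1}{2\eta}\|\pmb{v}_t\|^2$, the improve-or-localize lemma that bounds how far the iterates travel when $E_t$ fails to drop, the negative-curvature-exploitation subroutine (Algorithm~\ref{algo:NCE}), and the pancake lemma locating the stuck region in a thin slab orthogonal to the most negative Hessian eigenvector $\pmb{e}_1$---depend only on the AGD update and the smoothness assumptions, not on how the perturbation $\pmb{\xi}$ is sampled. These carry over verbatim from \cite{JNM18}.

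The new work is to show that the occupation-time-adapted perturbation $\pmb{\xi}$ in Algorithm~\ref{algo:PAGDOT} still lands outside the stuck slab with high probability, matching up to absolute constants the escape probability that PAGD obtains from a uniform ball perturbation. Conditional on the $d$ independent sign choices $\sigma^i \in \{-,+\}$ drawn with probabilities $p_i = w(R_t^i)/(w(L_t^i)+w(R_t^i))$, the vector $\pmb{\xi}$ is uniform on one of the $2^d$ sub-cubes of side $r/\sqrt{d}$ partitioning $C^d(\pmb{0}, r/\sqrt{d})$. The stuck region is contained in a slab of thickness $O(\mu_0 r/\sqrt{d})$ in direction $\pmb{e}_1$, where $\mu_0$ is the small parameter inherited from the PAGD pancake lemma. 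The uniform distribution on any axis-aligned cube of side $r/\sqrt{d}$ is log-concave with variance $\Theta(r^2/d)$ in every unit direction, so by standard log-concave density bounds its one-dimensional marginal in the direction $\pmb{e}_1$ has density at most $O(\sqrt{d}/r)$. Integrating over the slab gives a conditional stuck probability of at most $O(\mu_0)$, and averaging over $\sigma$ preserves the bound. Thus the history-dependent bias in the sign choices merely redistributes mass across the $2^d$ orthant-subcubes without inflating any directional marginal.

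With the perturbation comparison in hand, the rest of the proof is the same bookkeeping as in \cite{JNM18}: over any window of $\utime$ iterations, either $E_t$ decreases by $f_{\mathrm{thres}}$ (which can happen at most $O(\Delta_f / f_{\mathrm{thres}})$ times in total), or the perturbation at the start of the window escapes the saddle with probability $1 - O(\mu_0)$. A union bound over all such windows together with the parameter setting of \cite{JNM18} yields the iteration bound stated in Theorem~\ref{thm:PAGDOT}. The main obstacle is the marginal-density step in general eigendirections: a naive volume comparison across the $2^d$ orthants would inflate the density by a factor of $2^d$, and it is precisely the log-concavity reduction that keeps the bound dimension-free in the relevant direction.
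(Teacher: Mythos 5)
Your proposal takes essentially the same route as the paper: it reuses the PAGD machinery of \cite{JNM18} unchanged and redoes only the escape-probability step by conditioning on the sign vector, observing that the conditional law is uniform on an axis-aligned sub-cube of side $r/\sqrt{d}$, bounding its marginal density in the direction $\pmb{e}_1$ dimension-freely, and noting that the biased orthant probabilities then average out harmlessly --- exactly the paper's decomposition over the $2^d$ orthants. The only cosmetic difference is that you obtain the $O(\sqrt{d}/r)$ marginal bound from log-concavity and the variance of the sub-cube, whereas the paper cites Ball's cube-slicing bound \cite{Ball86}; both yield the same $O(\delta)$ stuck probability and the same final iteration count.
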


It is worth mentioning that Algorithms \ref{algo:PGDOT} and \ref{algo:PAGDOT} share some spirit with simulated annealing and GD with momentum methods such as the heavy ball method \cite{Polyak164}.
In simulated annealing, the perturbation is  {\it time-adapted} while the perturbation in Algorithms \ref{algo:PGDOT} and \ref{algo:PAGDOT} is {\it state-adapted} (to the history of states).
In the heavy ball method, a momentum term, which is a function of the current and previous states, is explicitly added to control the  oscillations and accelerate in low curvatures along the direction close to momentum.
In Algorithms \ref{algo:PGDOT} and \ref{algo:PAGDOT}, however, no explicit momentum term is added. Instead, the perturbation is adapted to the history of states providing the current state with an explicit direction.

\section{Empirical Results}
\label{s4}

This section presents empirical results to corroborate the theoretical analysis presented in the previous section. Different machine learning tasks are considered including a nonlinear regression problem adapted from learning time series data, a regularized linear quadratic problem, the phase retrieval problem, and training MLPs on the MNIST and CIFAR-10 datasets. 

As shown in these examples, integrating our new perturbation mechanism into the framework of perturbation-based algorithms boosts their performance: PGDOT and PAGDOT escape saddle points or plateaus faster than their counterparts. Specifically, example 4 shows that in training MLPs on the MNIST and CIFAR-10 datasets, PGDOT and PAGDOT are robust against different initialization and manage to escape saddle points efficiently; in contrast, PGD and PAGD as well as other popular algorithms such as stochastic gradient descent (SGD), Adam, AMSGrad, and RMSProp fail to do so. In addition, PAGDOT converges faster than PGDOT in all these examples, which is in line with the theoretical results in Theorems \ref{thm:PGDOT} and \ref{thm:PAGDOT}.

In these experiments, we use $L^i_t(h): = \# \{ t-t_{\tiny \mbox{count}} \leq s < t: x^i_t - h \leq
x^i_s \leq x^i_t \}$ 
and $R^i_t(h): = \# \{t - t_{\tiny \mbox{count}} \leq s < t: x^i_t  < x^i_s \leq x^i_t + h \}$ instead of $L^i_t$ and $R^i_t$ in Algorithms \ref{algo:PGDOT} and \ref{algo:PAGDOT}.
Here $h$ is a hyperparameter characterizing the occupation time over a small interval. 
$t_{\tiny \mbox{count}}$ is another hyperparameter prescribing how long one should keep track of the history of $\pmb{x}_t$ in order to approximate the occupation time with a constant memory cost. We choose the weight function in Algorithms \ref{algo:PGDOT} and \ref{algo:PAGDOT} as $w(n)=1+n^{5}$.
All other hyperparameters used in the numerical examples are reported in Appendix E.


\paragraph{Example 1}
Given $N\in \mathbb{Z}^+, L\in \mathbb{R}^+$,  define a function $\tilde{f}: \mathbb{R}^+ \to \mathbb{R}^+$ as
\small
\begin{equation*}
    \tilde{f}(r)= 
\begin{cases}
    r^3, & r\in[0, \frac12L), \\
    (r-nL)^3 + \frac14nL^3, & r\in[a(n), b(n)),  1\leq n \leq N, \\
    (r-NL)^3 + \frac14NL^3, & r\in [NL+\frac12L, \infty),
\end{cases}
\end{equation*}
\normalsize
where $a(n) = nL-\frac12L$ and $b(n) = nL+\frac12L$.
For $\pmb{x}=(x_1,\dots,x_d)\in \mathbb{R}^d$, we define 
 $   f(\pmb{x}) = \tilde{f}\left(\frac1d{\sum_{i=1}^d x_i^2}\right).$
Figure \ref{fig:example1} gives the visualization of the case $N=4, L=1$ and also the training curves of $f$ given by 5 different algorithms when $d=4$. The initial values are all the same, and all the algorithms except for GD are run 3 times considering the randomness of perturbations. 
We can see that while GD gets stuck at the saddle points, all the rest of the algorithms escape from them. Moreover, the new algorithms PGDOT and PAGDOT outperform their counterparts. 

\begin{figure}[ht]
\centering
\begin{subfigure}
    \centering
    \includegraphics[width=0.3\columnwidth]{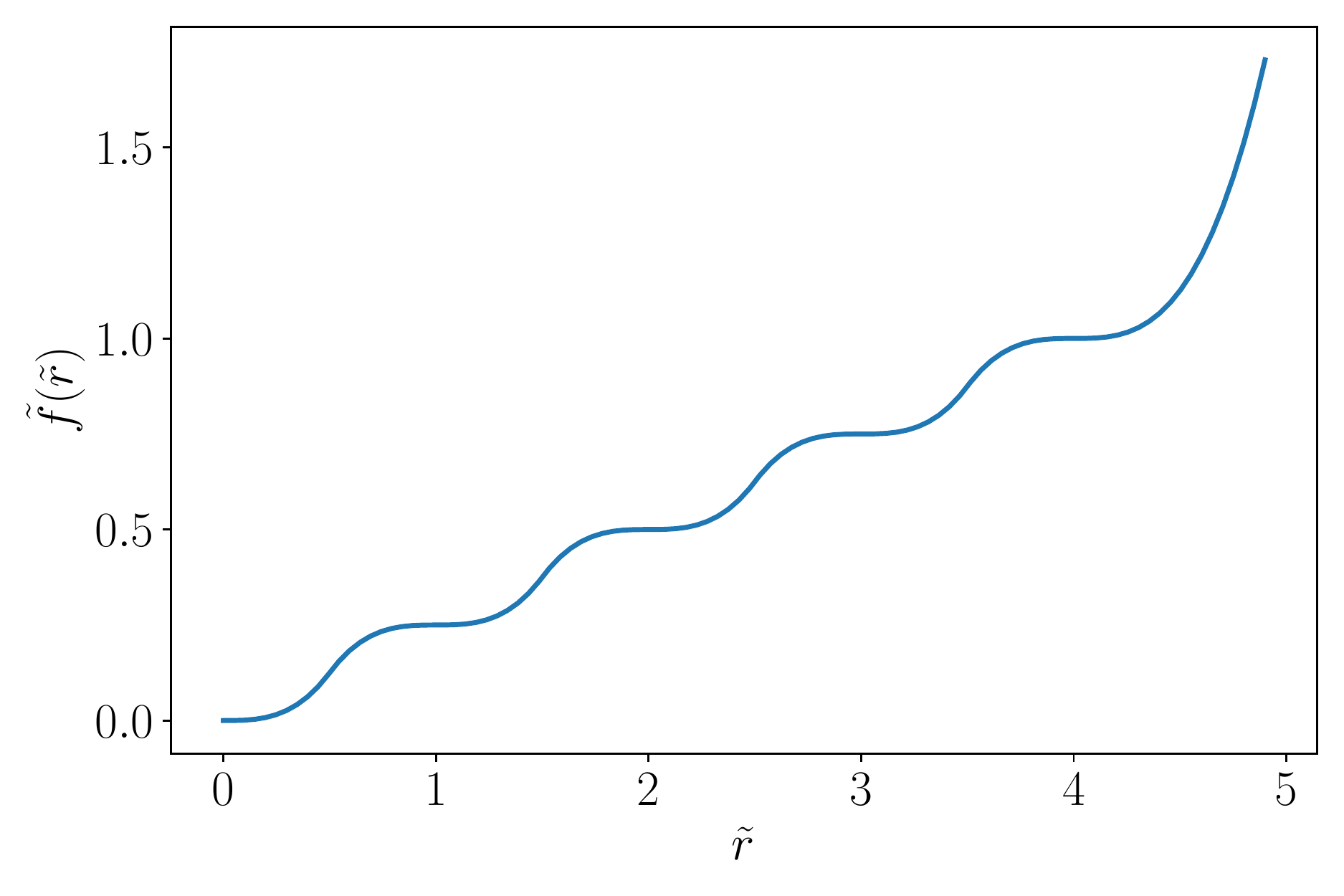}
\end{subfigure}
\vspace{-0.1in}
\begin{subfigure}
    \centering
    \includegraphics[width=0.33\columnwidth]{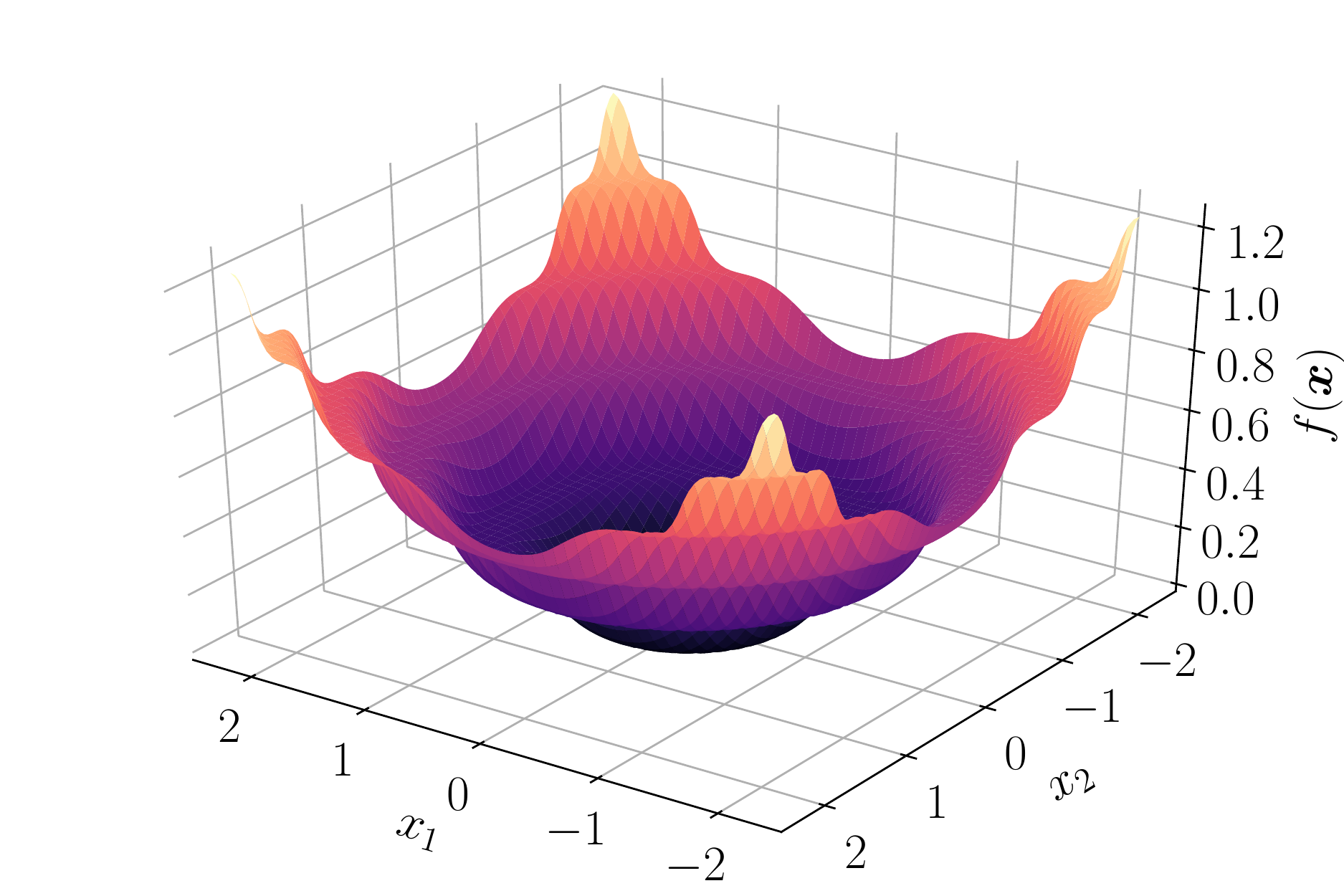}
\end{subfigure} 
\begin{subfigure}
    \centering
    \includegraphics[width=0.4\columnwidth]{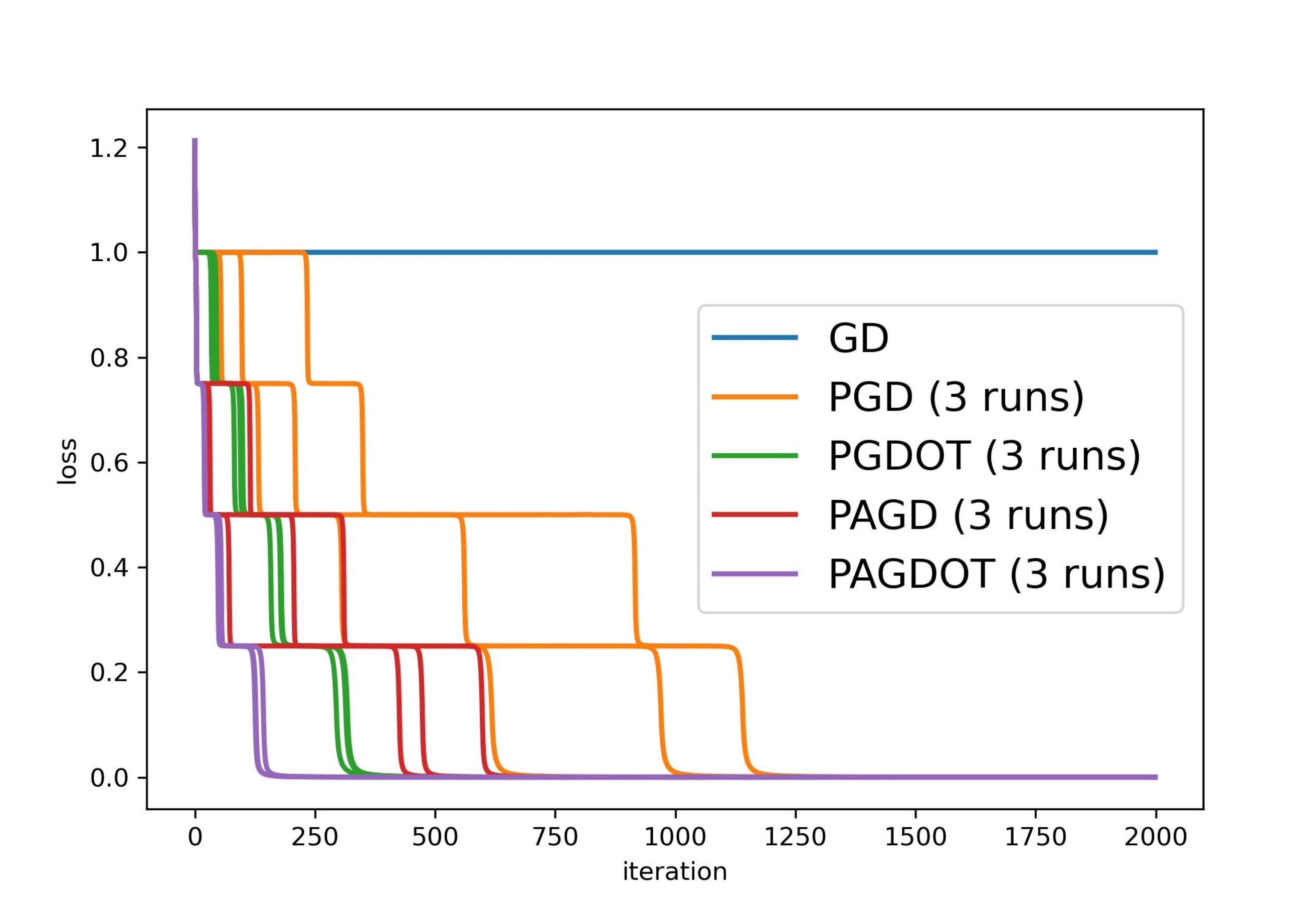}
\end{subfigure}
\vspace{-0.1in}
\caption{Graph of $\tilde{f}$ (top left) and the landscape of $f(\pmb{x})$ (top right) with $\pmb{x} \in \mathbb{R}^2$ in the case of $N=4, L=1$, and the performance of different algorithms (bottom) when $N=4, L=1, d=4$  in Example 1.}
\vspace{-0.08in}
\label{fig:example1}
\end{figure}

\paragraph{Example 2}
We consider a nonlinear regression problem, adapted from learning time series data with a continuous dynamical system~\cite{LHEL21}. The loss function is defined as
$
    f(\pmb{x}) = \frac1N\sum_{i=1}^N(\hat{y}(s_i;\pmb{x}) - y^*(s_i))^2,
$
where $\{s_i\}_{i=1}^N$ are $N$ sample points, $y^*(s)$ is the target function, and $\hat{y}(s)$ is the function to fit with the form
$
    \hat{y}(s;\pmb{x}) = \sum_{m=1}^M(a_{m}\cos{(\lambda_ms)} + b_{m}\sin{(\lambda_ms)})e^{w_ms}.
$
Here $\pmb{x}=\{a_m,b_m,\lambda_m,w_m\}_{m=1}^M$ and the optimization problem is non-convex. We assume 
$y^*(s)=\mathrm{Ai}(\omega [s - s_0])$, where $\omega=3.2, s_0=3.0$, and $\mathrm{Ai}(s)$ is the Airy function of the first kind, given by the improper integral
$\mathrm{Ai}(s) = \frac{1}{\pi} \int_{0}^{\infty} \cos \left(\frac{u^{3}}{3}+s u\right) \mathrm{d} u$.

For the specific regression model, we assume $M=4$ and use $N=50$ data points with $s_i=i/10, i=0,\dots,49$. 
Figure \ref{fig:example2} shows the target function and the fitted function obtained by PGDOT. Also, the learning curves of 5 different algorithms are plotted. 
Again, PGDOT and PAGDOT escape the saddle point faster than GD and outperform  PGD and PAGD, respectively.

\begin{figure}[ht]
\centering
\hspace{-0.33in}
\vspace{-0.05in}
\includegraphics[width=0.39\columnwidth]{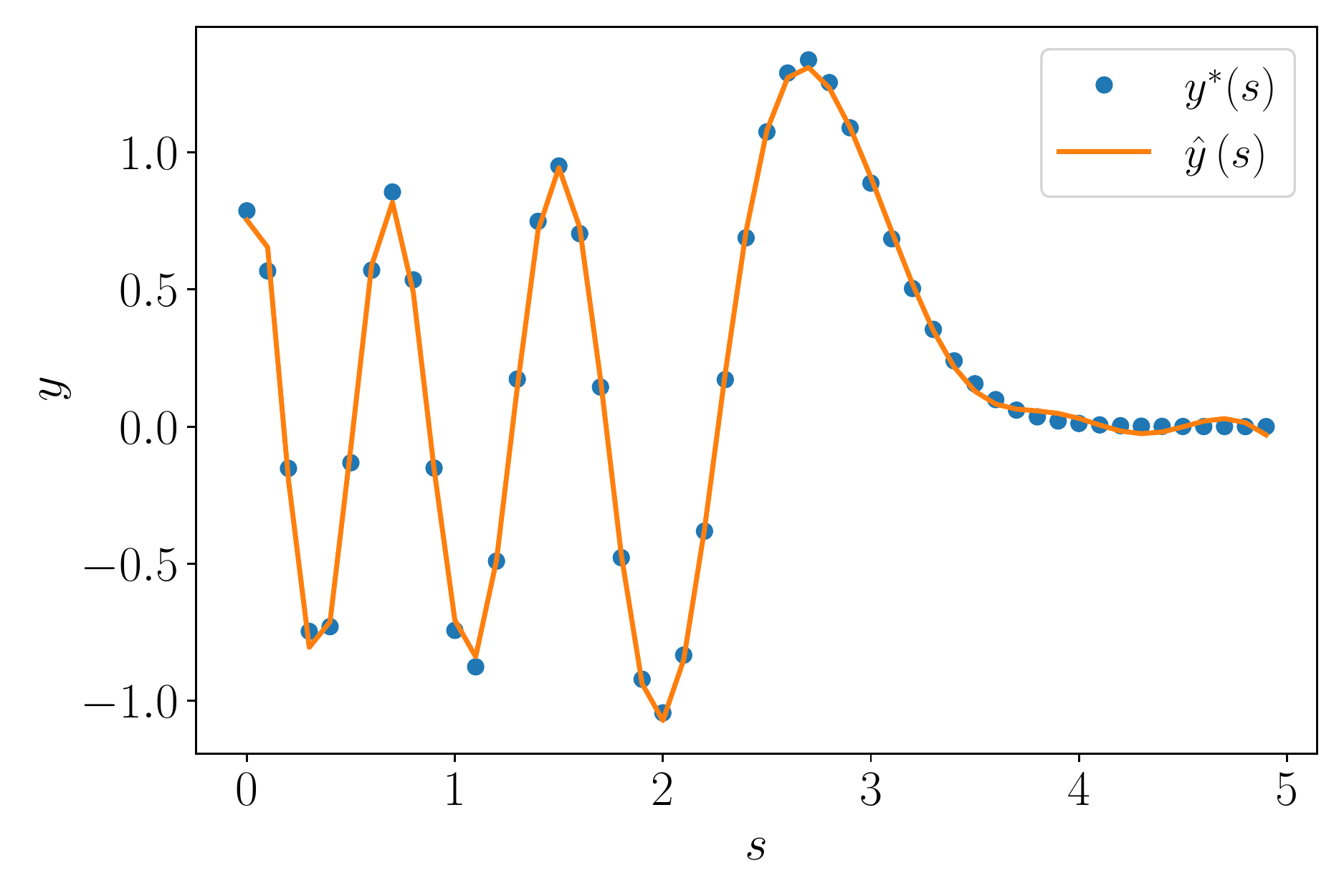}
\includegraphics[width=0.41\columnwidth]{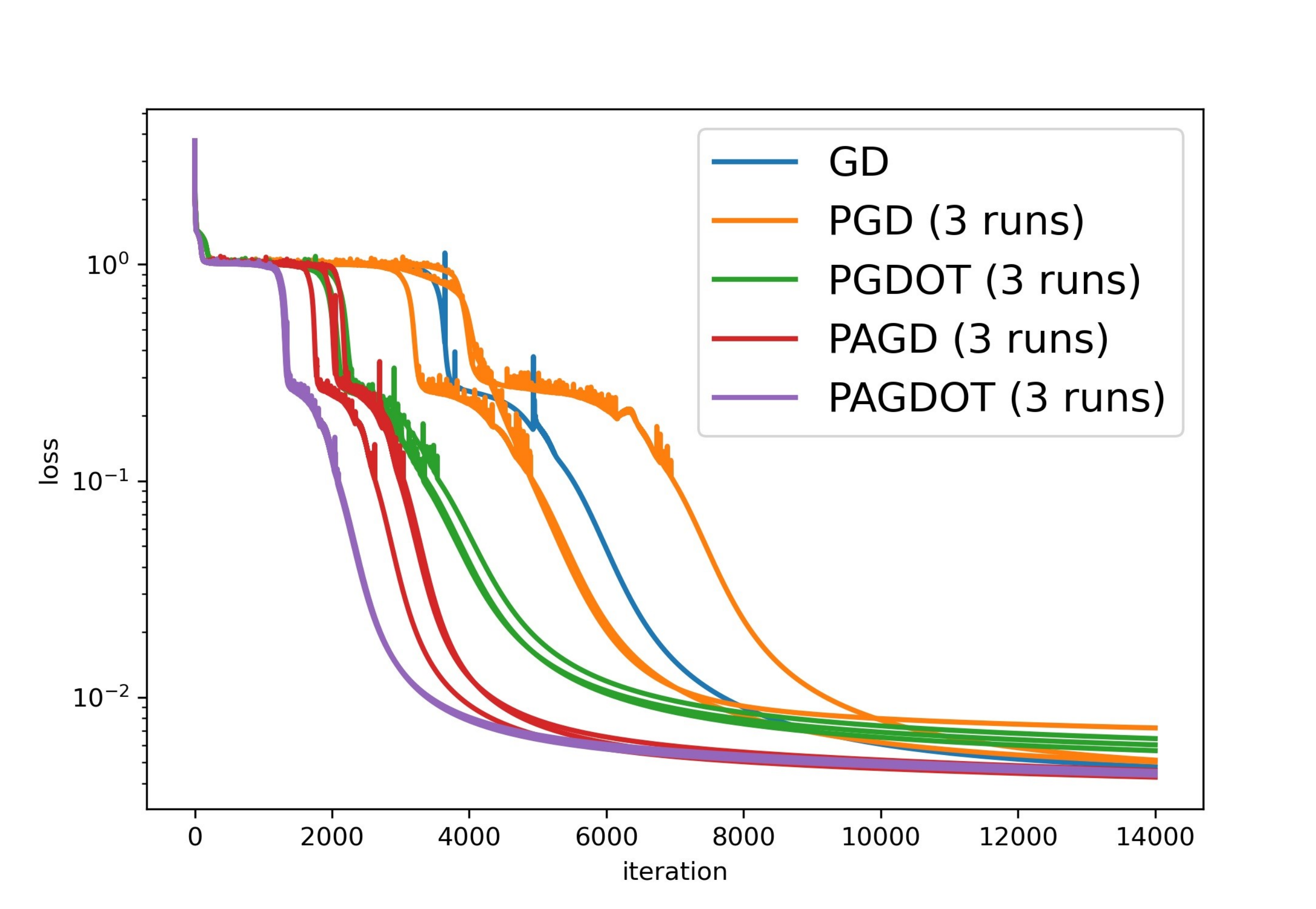}
\vspace{-0.1in}
\caption{The target function $y^*(t)$ and fitted function $\hat{y}(t)$  obtained by PGDOT (left), and the performance of different algorithms (right) in Example 2.}
\vspace{-0.1in}
\label{fig:example2}
\end{figure}

\paragraph{Example 3}
The next two non-convex optimization problems are taken from \cite{WLA19}.
The first problem is a regularized linear-quadratic problem \cite{RZ18}, whose loss function is
\begin{equation*}
    f_1(\pmb{x}) = \frac{1}{N} \sum_{i=1}^N(\frac{1}{2}\pmb{x}^TH\pmb{x}+\pmb{b}_i^T\pmb{x}+||\pmb{x}||_{10}^{10}),
\end{equation*} 
where we take $N = 10$, $H=\text{diag}([1, -0.1])$ and $\pmb{b}_i$'s instances of  $\mathcal{N}(0,\text{diag}([0.1, 0.001]))$. 
The second problem is the phase retrieval problem \cite{CE13} with loss function
\begin{equation*}
    f_2(\pmb{x}) = \frac{1}{N}\sum_{i=1}^N((\pmb{a}_i^T\pmb{x})^2 -(\pmb{a}_i^T\pmb{x}^*)^2)^2,
\end{equation*}
where we choose $N = 200$, $\pmb{x}^*$ an instance of $\mathcal{N}(0,I_d/d)$ and $\pmb{a}_i$'s instances of $\mathcal{N}(0,I_d)$ with $d=10$.

We initialize the regularized linear-quadratic problem with $\pmb{x}_0 = 0$, and the phase retrieval problem with $\pmb{x}_0$ sampled from $\mathcal{N}(0,I_d/(10000d))$.
Figure \ref{fig:example3} presents the learning curves of 5 different algorithms.
In both problems,  all other algorithms escape saddle points faster than GD, with PGDOT and PAGDOT outperforming their counterparts.

\begin{figure}[ht]
\centering
\hspace{-0.09in}
\vspace{-0.05in}
\includegraphics[width=0.4\columnwidth]{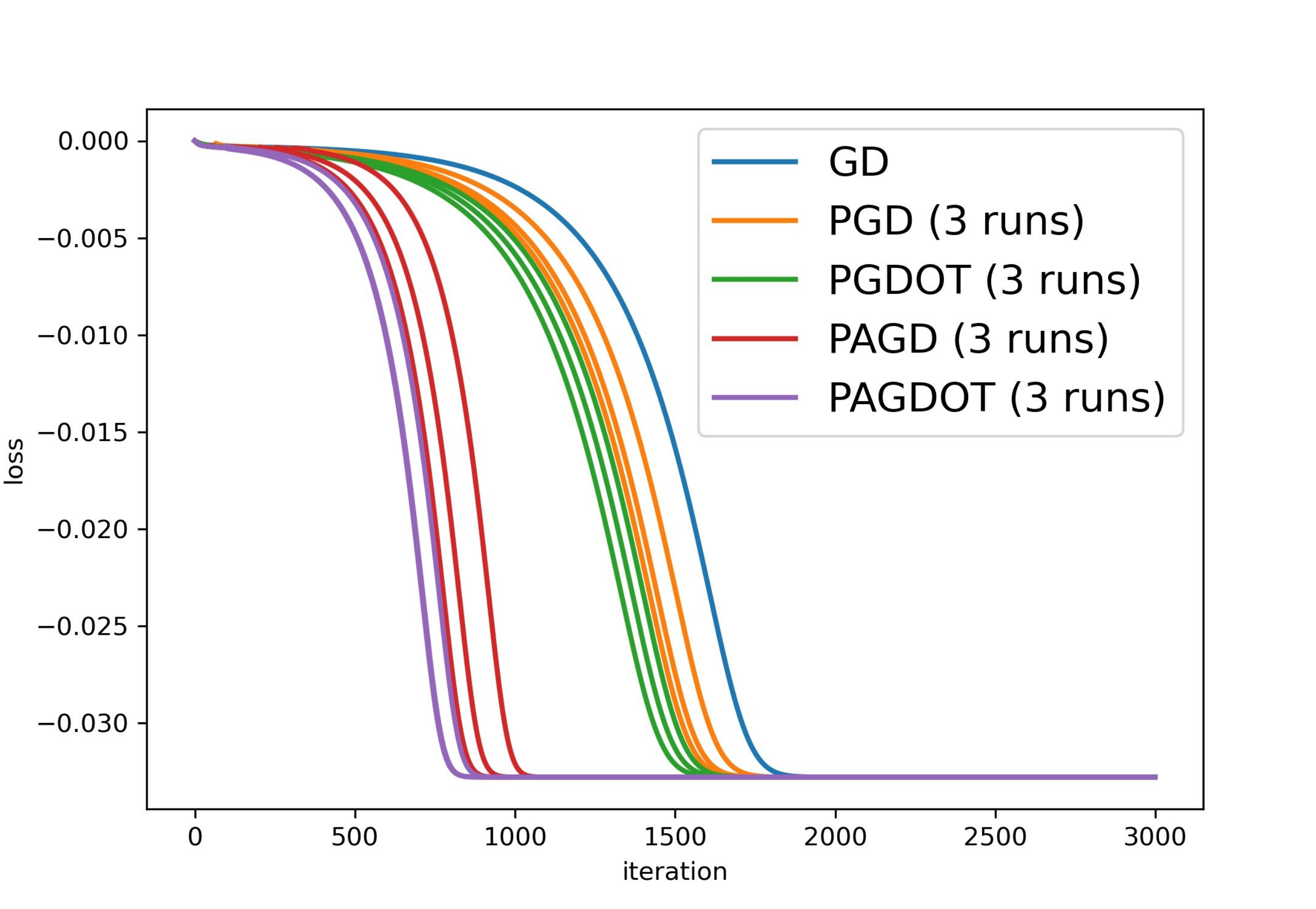}
\includegraphics[width=0.39\columnwidth]{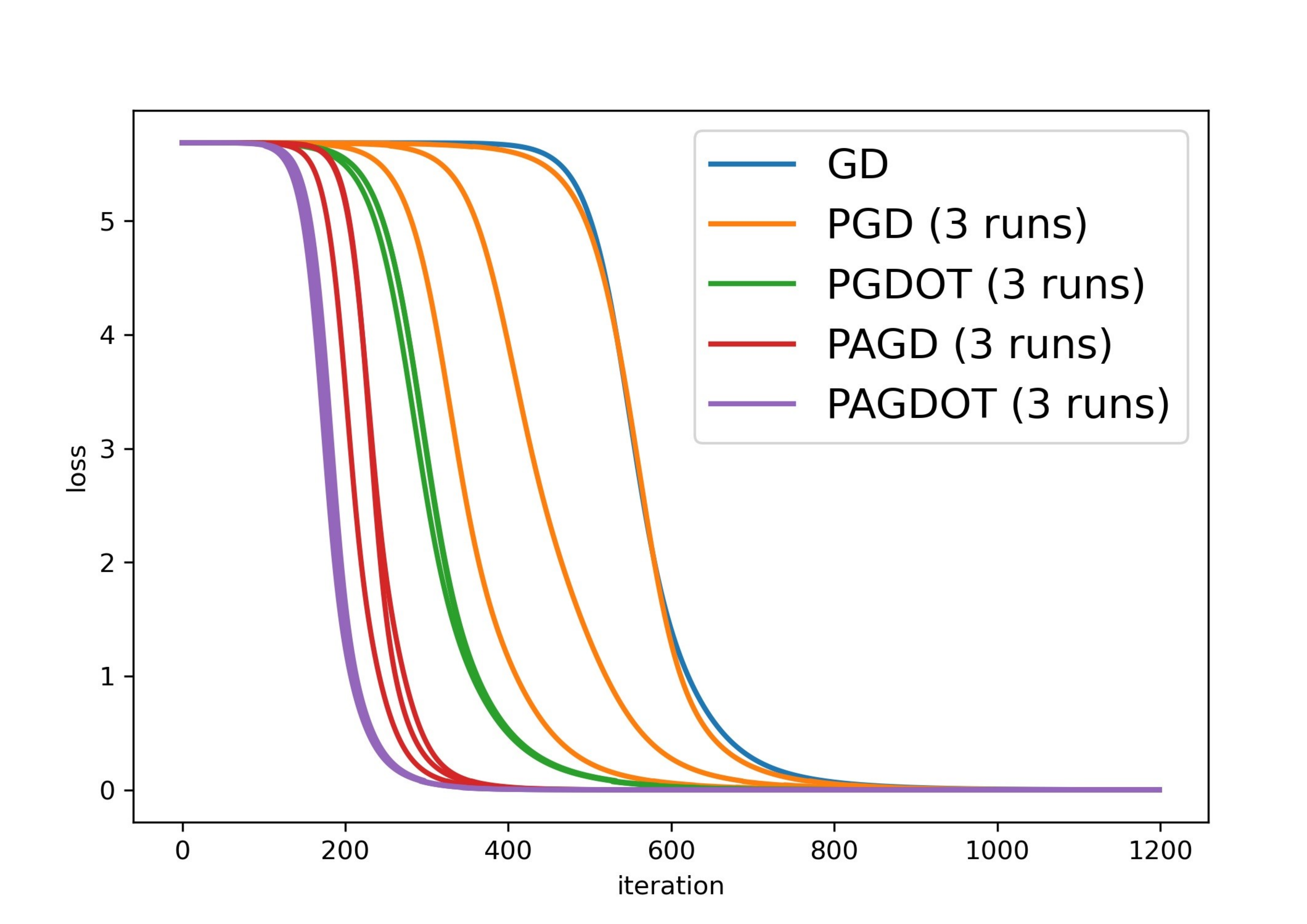}
\vspace{-0.1in}
\caption{The performance of different algorithms for $f_1$ (left) and $f_2$ (right) in Example 3.}
\vspace{-0.2in}
\label{fig:example3}
\end{figure}

\paragraph{Example 4}
\cite{dauphin2014identifying} observed that in training simple MLPs (MLPs with only one hidden layer) on the MNIST and CIFAR-10 datasets, SGD might get stuck at saddle points.
Moreover, as demonstrated in \cite{swirszcz2016local}, Adam also gets stuck and performs poorly when a simple MLP with specific initialization is trained on the MNIST dataset. Inspired by \cite{swirszcz2016local}, we conduct two sets of experiments on the MNIST and CIFAR-10 datasets, in which we train several simple MLPs using the mini-batch version of our proposed algorithms as well as the mini-batch version of other popular alternatives such as SGD, Adam, AMSGrad, and RMSProp. For both of the datasets, the batch size is set to 128 and all the images are downsized to be of size $10 \times 10$.

In the first set of experiments, we train several simple MLPs whose weights and biases are initialized with $\mathcal{N}(0, 0.01)$ on the aforementioned datasets. The top two rows in Figure \ref{fig:example4} show the training curves of SGD, Adam, PGD, PGDOT, PAGD, and PAGDOT. Note that $n\_hidden$ is the number of neurons in the hidden layer of simple MLP. Observe that all algorithms manage to escape saddle points in all the cases with the exception of Adam, which fails on the CIFAR-10 dataset. 

For the second set of experiments, we consider several simple MLPs whose weights and biases are initialized with $\mathcal{N}(-1, 0.01)$.
The bottom two rows in Figure \ref{fig:example4} show the training curves of different algorithms. For both of the datasets, while SGD and Adam are stuck at the saddle points, the new algorithms PGDOT and PAGDOT escape the saddle points and significantly outperform their counterparts. Note that for the CIFAR-10 dataset, the training curves of SGD, Adam, PGD, and PAGD are almost identical.

\begin{figure*}[ht]
    \centering
    \includegraphics[width=0.65\linewidth]{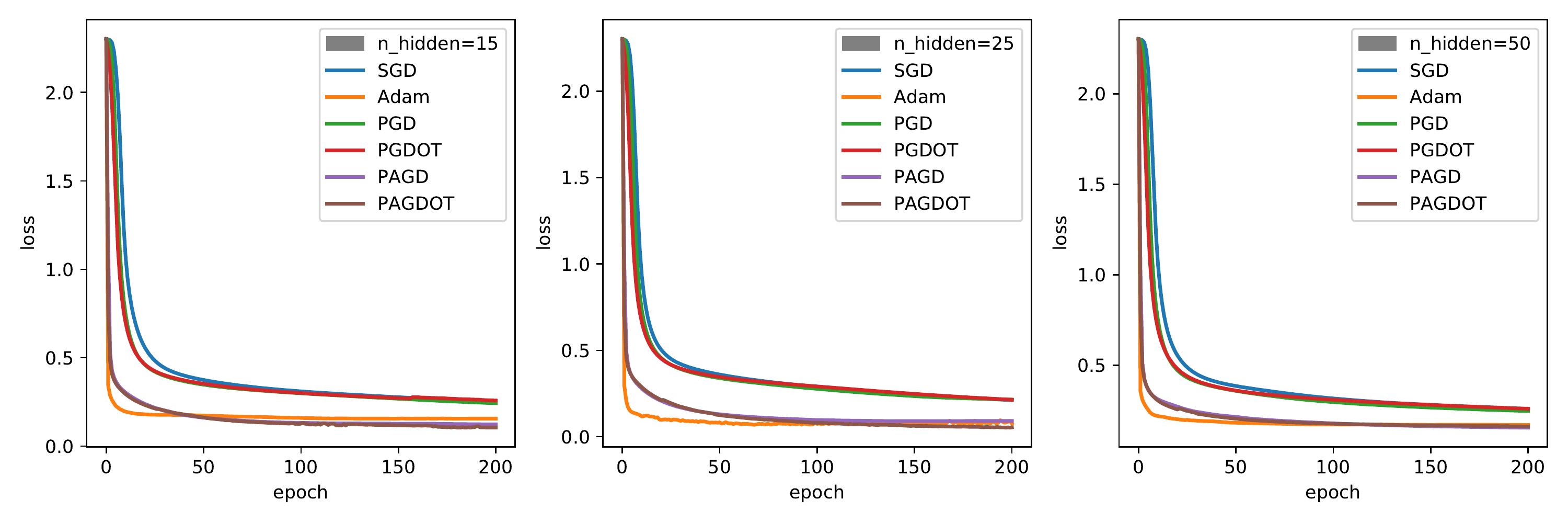}
     \includegraphics[width=0.65\linewidth]{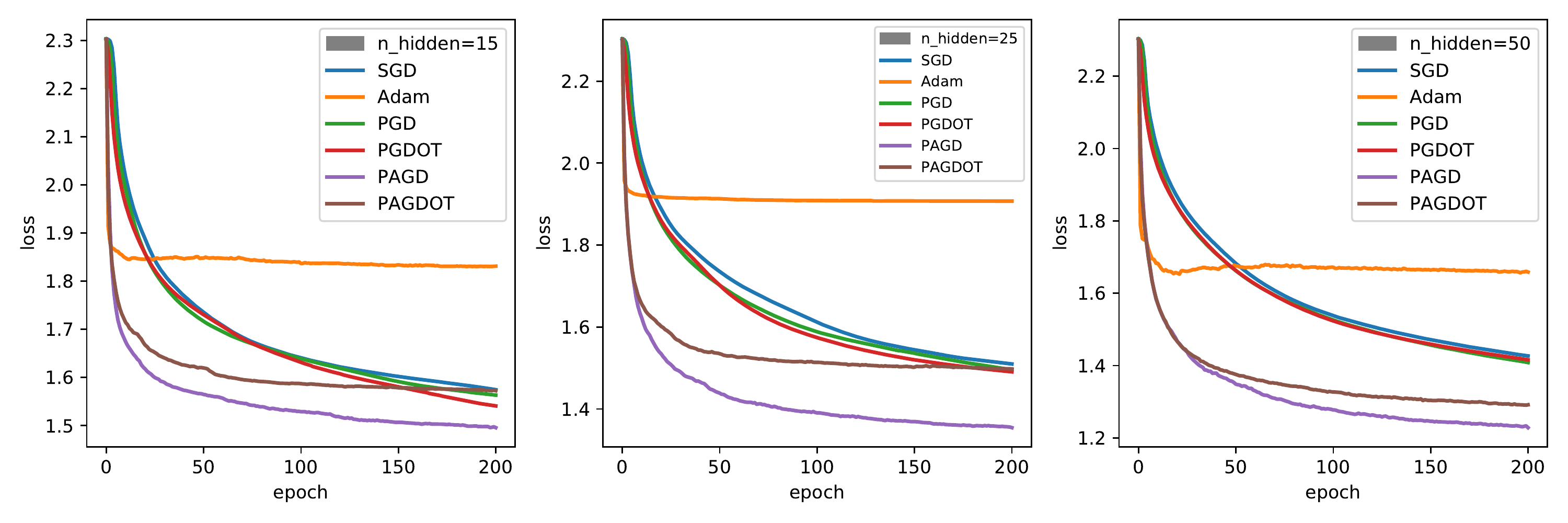}
     \includegraphics[width=0.65\linewidth]{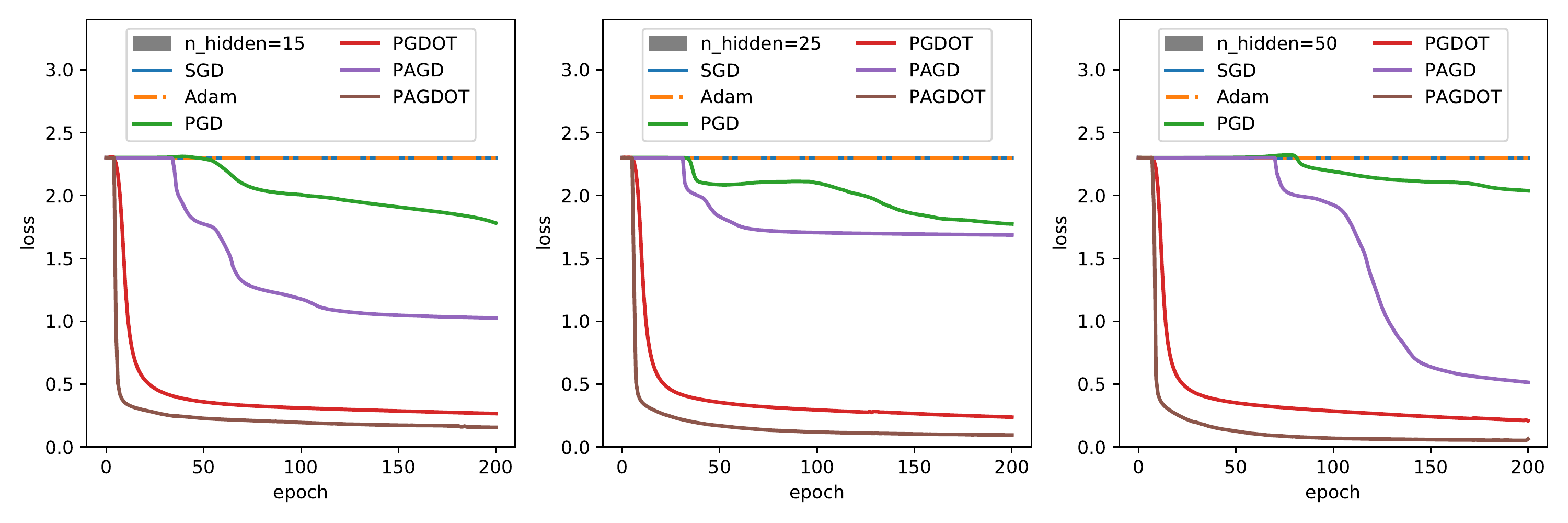}
    \includegraphics[width=0.65\linewidth]{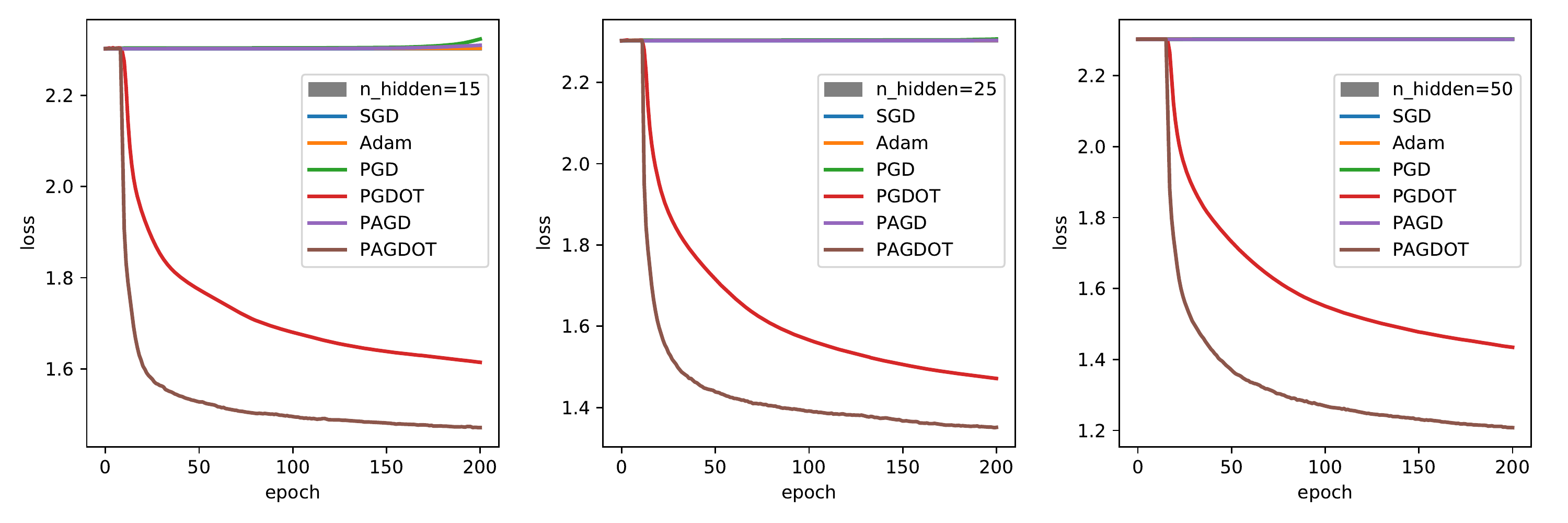}
    \vspace{-0.1in}
    \caption{Training simple MLPs with two different initialization on the MNIST and CIFAR-10 datasets. First row: $\mathcal{N}(0, 0.01)$ initialization, MNIST; Second row: $\mathcal{N}(0, 0.01)$ initialization, CIFAR-10; Third row: $\mathcal{N}(-1, 0.01)$ initialization, MNIST; Fourth row: $\mathcal{N}(-1, 0.01)$ initialization, CIFAR-10.}
    \vspace{-0.12in}
    \label{fig:example4}
\end{figure*}

It is also worth mentioning that other variants of Adam such as AMSGrad \cite{RKK19} and RMSProp also fail to escape the saddle points in the training process (see Figure \ref{fig:example4_apdx} in Appendix F). Moreover, comparing these results with that of the first set of experiments (Figure \ref{fig:example4}, top two rows) we conclude that the new perturbation mechanism helps the algorithms to be robust against different initialization.



\section{Conclusion}
\label{s5}

In this paper, we develop a new perturbation mechanism in which the perturbations are adapted to the history of states via the notion of occupation time. This mechanism is integrated into the framework of PGD and PAGD resulting in two new algorithms: PGDOT and PAGDOT. 
We prove that PGDOT and PAGDOT converge rapidly to second-order stationary points, which is corroborated by empirical studies ranging from 
time series analysis and  the phase retrieval problem to neural networks.

\bibliography{example_paper}
\bibliographystyle{icml2022}



\newpage
\appendix
\onecolumn

\section{Monotone convergence of gradient descent}

Here we prove a property of gradient descent applied to a function $f: \mathbb{R} \to \mathbb{R}$, as mentioned in the introduction. 
This property of gradient descent supports the use of our proposed perturbation mechanism.

\begin{proposition}
Let $f \in \mathcal{C}^2(\mathbb{R})$. 
Assume that we start gradient descent at some arbitrary point $x_0$, and the corresponding iterates $\{x_n\}_{n\geq0}$ converge to the point $x_s$ with $f''(x_s) \neq 0$. Then, if $f$ is $\ell$-smooth and the step size is less than $\frac{1}{\ell}$, the sequence $\{x_n\}_{n\geq 0}$ converges monotonically to $x_s$.
\end{proposition}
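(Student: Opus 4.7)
The plan is to exploit the global order-preserving property of the gradient-descent map $T : \mathbb{R} \to \mathbb{R}$ defined by $T(x) = x - \eta f'(x)$, so that $x_{n+1} = T(x_n)$. The key calculation is brief: since $f \in \mathcal{C}^2(\mathbb{R})$ is $\ell$-smooth, one has $|f''(x)| \le \ell$ for every $x \in \mathbb{R}$, whence
\[
T'(x) = 1 - \eta f''(x) \;\ge\; 1 - \eta\ell \;>\; 0
\]
by the assumption $\eta < 1/\ell$. Thus $T$ is strictly increasing on the whole real line, with no convexity hypothesis on $f$ needed.

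Given this, monotonicity of the iterates follows by a one-line induction on the sign of the initial increment $x_1 - x_0 = -\eta f'(x_0)$. If $x_1 > x_0$, then applying $T$ to both sides and using strict monotonicity inductively yields $x_{n+1} = T(x_n) > T(x_{n-1}) = x_n$ for every $n \ge 1$, so $\{x_n\}$ is strictly increasing; the case $x_1 < x_0$ is symmetric and produces a strictly decreasing sequence. In the degenerate case $x_1 = x_0$ we have $f'(x_0) = 0$, so every iterate equals $x_0$ and the sequence is trivially monotone; uniqueness of the limit then forces $x_s = x_0$, which is consistent with $f''(x_s) \neq 0$ (so that $x_s$ is an isolated zero of $f'$). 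In each case the sequence is monotone, and by hypothesis converges to $x_s$, so the convergence is monotone.

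There is essentially no technical obstacle here; the only conceptual point is to recognize that what is being used is not any contractivity of $T$ (which would typically invoke convexity of $f$), but its order-preserving property, which holds globally solely because $T'$ is bounded below by $1 - \eta\ell > 0$. In particular, the sign of $f''(x_s)$ plays no role in the monotonicity argument, and the nondegeneracy hypothesis $f''(x_s) \neq 0$ only enters to identify the limit in the trivial constant-sequence case. The reason the step-size condition is written as $\eta < 1/\ell$ rather than $\eta \le 1/\ell$ is precisely to guarantee $T' > 0$ strictly; allowing equality would only yield $T' \ge 0$ and hence weak monotonicity of $T$, which is insufficient to propagate strict ordering between consecutive iterates.
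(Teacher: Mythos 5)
Your proof is correct, and it takes a genuinely different and markedly more elementary route than the paper's. The paper argues in two stages: first (Lemma A.1) it shows the iterates stay on one side of $x_s$ and are \emph{eventually} monotone, via a contradiction argument that, assuming infinitely many oscillations, manufactures zeros of $f'$ and then (by the mean value theorem) zeros of $f''$ accumulating at $x_s$, contradicting $f''(x_s)\neq 0$; a second lemma then upgrades eventual monotonicity to monotonicity from the start by another contradiction using the Lipschitz bound. You instead observe that the one-step map $T(x)=x-\eta f'(x)$ satisfies $T'(x)=1-\eta f''(x)\ge 1-\eta\ell>0$ (since $\ell$-smoothness of a $\mathcal{C}^2$ function gives $|f''|\le\ell$), so $T$ is strictly order-preserving on all of $\mathbb{R}$, and monotonicity of $\{x_n\}$ follows by a one-line induction on the sign of $x_1-x_0$, with the constant-sequence case handled trivially. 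What your approach buys: it is shorter, avoids both contradiction arguments, and in fact shows that the hypothesis $f''(x_s)\neq 0$ is not needed for the monotonicity conclusion at all (the paper's proof leans on it essentially), while isolating the exact role of the strict inequality $\eta<1/\ell$, namely strict positivity of $T'$. What the paper's argument buys, by comparison, is mainly that it works directly with the iterates and the limit point without invoking the global derivative bound, but as a proof of this proposition yours is cleaner and strictly more general.
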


In order to prove this proposition, we break it down into two lemmas.

\begin{lemma}
\label{lem:B1}
Let $f \in \mathcal{C}^2(\mathbb{R})$. Assume that we start gradient descent at some arbitrary point $x_0$, and the corresponding iterates $\{x_n\}_{n\geq0}$ converge to the point $x_s$ with $f''(x_s) \neq 0$. Then, if $f$ is $\ell$-smooth and the step size is less than $\frac{1}{\ell}$, there exists $M > 0$ such that the sequence $\{x_n\}_{n\geq M}$ converges monotonically to $x_s$.
\end{lemma}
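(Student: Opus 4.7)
The plan is to exploit continuity of $f''$ near $x_s$ together with a mean-value representation of the gradient descent map, after which monotonicity falls out of a direct contraction estimate.

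First I would verify that $x_s$ is a critical point: since $\{x_n\}$ converges, $x_{n+1} - x_n = -\eta f'(x_n) \to 0$, and continuity of $f'$ gives $f'(x_s) = 0$. Because $f''(x_s) \neq 0$ and $f''$ is continuous, there is a $\delta > 0$ such that $f''$ has the same (nonzero) sign as $f''(x_s)$ on $U := (x_s - \delta, x_s + \delta)$; I pick $M$ so that $x_n \in U$ for all $n \geq M$. For each such $n$, the mean value theorem yields some $\xi_n$ between $x_n$ and $x_s$ (hence in $U$) with $f'(x_n) - f'(x_s) = f''(\xi_n)(x_n - x_s)$, so the update $x_{n+1} = x_n - \eta f'(x_n)$ rewrites as
\begin{equation}
x_{n+1} - x_s \;=\; \bigl(1 - \eta f''(\xi_n)\bigr)(x_n - x_s). \label{eq:keyid}
\end{equation}
The $\ell$-smoothness assumption gives $|f''(\xi_n)| \leq \ell$, and the hypothesis $0 < \eta < 1/\ell$ then forces $\eta f''(\xi_n) \in (-1,1)$.

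If $f''(x_s) > 0$, then $f''(\xi_n) > 0$ throughout $U$, so the multiplier $1 - \eta f''(\xi_n)$ lies strictly in $(0,1)$. By \eqref{eq:keyid}, $x_{n+1} - x_s$ has the same sign as $x_n - x_s$ while $|x_{n+1} - x_s| < |x_n - x_s|$ (or $x_n = x_s$, in which case the iterates are constant thereafter). An immediate induction shows that $\{x_n\}_{n \geq M}$ is monotone and converges to $x_s$.

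If instead $f''(x_s) < 0$, then $1 - \eta f''(\xi_n) \in (1,2)$, so \eqref{eq:keyid} yields $|x_{n+1} - x_s| > |x_n - x_s|$ whenever $x_n \neq x_s$. Since $x_n \in U$ for all $n \geq M$, the sequence $\{|x_n - x_s|\}_{n \geq M}$ would be strictly increasing, contradicting $x_n \to x_s$ unless $x_n = x_s$ for some $n \geq M$; in that degenerate subcase $f'(x_n) = 0$ and the iterates are constant (hence trivially monotone) from that index onward. The only mildly subtle step is disposing of this second case cleanly, and it reduces to a one-line argument via the contraction/expansion dichotomy encoded in \eqref{eq:keyid}.
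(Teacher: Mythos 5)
Your proof is correct, and it takes a genuinely different route from the paper's. The paper first shows the iterates stay on one side of $x_s$ (via the Lipschitz bound $|f'(x_n)|\le \ell|x_n-x_s|$ and $\eta<1/\ell$), and then argues by contradiction: if monotonicity failed infinitely often, $f'$ would change sign infinitely often along a sequence approaching $x_s$, the intermediate value theorem would produce zeros of $f'$ accumulating at $x_s$, Rolle's theorem would then produce zeros of $f''$ accumulating at $x_s$, and continuity would force $f''(x_s)=0$ --- a soft, oscillation-based contradiction that never needs to case-split on the sign of $f''(x_s)$. You instead linearize the map around $x_s$ via the mean value theorem, obtaining the identity $x_{n+1}-x_s=\bigl(1-\eta f''(\xi_n)\bigr)(x_n-x_s)$ with $\xi_n$ in a neighborhood where $f''$ keeps the sign of $f''(x_s)$, and read off monotonicity directly: when $f''(x_s)>0$ the multiplier lies in $(0,1)$, so sign preservation plus contraction gives monotone convergence outright; when $f''(x_s)<0$ the multiplier exceeds $1$, so convergence is only possible if the iterates land exactly on $x_s$, after which they are constant. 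Your approach buys more information --- an eventual linear contraction rate near a nondegenerate minimum, and the explicit conclusion that convergence to a strict local maximum can only happen by hitting it exactly --- at the cost of an explicit case analysis; the paper's argument is less quantitative but uniform over the two signs. Two small points worth making explicit if you write this up: the bound $|f''|\le\ell$ follows from $\ell$-gradient-Lipschitzness by a difference-quotient limit (since $f\in\mathcal{C}^2$), and in the degenerate subcase of your second branch the index $M$ of the lemma should simply be taken to be the first index at which $x_n=x_s$.
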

\begin{proof}
Note that for $n\geq 0$,
$x_{n+1} = x_n - \eta f'(x_n)$,
where $0<\eta<\frac{1}{\ell}$ is the step size. Also, it is easy to show that $f'(x_s)=0$.
Assume that at some point $x_n\geq x_s$. Then, since $|f'(x_n)-f'(x_s)|=|f'(x_n)|\leq \ell|x_n-x_s|$,  we have 
\begin{align*}
    x_s \leq x_n -\frac{1}{\ell}|f'(x_n)| & \leq x_n-\eta |f'(x_n)| \\
    &\leq  x_n - \eta f'(x_n) = x_{n+1}.
\end{align*}
Similarly, if $x_n \leq x_s$, then we get $x_{n+1} \leq x_s$. This implies that the sequence $\{x_n\}_{n\geq 0}$ is entirely either on the left hand side of $x_s$ or on its right hand side (including $x_s$). 

Without loss of generality, assume that the entire sequence of iterations lies on the right hand side of $x_s$.
If at some iteration, $x_m = x_s$, then since $f'(x_s)=0$, $x_n= x_s$ for $n\geq m$, which yields the desired result. 
So we can assume that $x_n \neq x_s $ for all $n\geq 0$. Using a similar argument, we can also assume that $f'(x_n) \neq 0$ for all $n\geq 0$.
Suppose by contradiction that there is no such $M$ as described in the lemma. 
Then there exist infinitely many $n$ such that $x_n<x_{n+1}$ implying that for infinitely many $n$, $f'(x_n)<0$. 
Since $\lim\limits_{n \rightarrow \infty} x_n = x_s$ and the entire sequence is on the right hands side of $x_s$, 
we also have infinitely many $n$ such that $f'(x_n)>0$. 
Combining these results, one can construct a strictly decreasing sub-sequence $\{y_n\}_{n\geq 0}$ of the iterations such that $\lim\limits_{n \rightarrow \infty} y_n = x_s$, $f'(y_{2m})>0$, and $f'(y_{2m+1})<0$ for all $m\geq 0$.  Since $f'$ is continuous, there exists $y_{2m+1}<z_m<y_{2m}$ such that $f'(z_m)=0$, for each $m\geq 0$. It is easy to see that $\{z_n\}_{n\geq 0}$ is also  strictly decreasing and $\lim\limits_{n \rightarrow \infty} z_n = x_s$. 
Note that since $f''$ is continuous, by the mean value theorem, one can find a sequence $\{t_n\}_{n\geq 0}$ such that for each $n\geq 0$, $z_{n+1} < t_n < z_n$ and $f''(t_n)=0$. Since $\{z_n\}_{n\geq 0}$ converges to $x_s$, then so does $\{t_n\}_{n \geq 0}$. But this implies that $f''(x_s) = \lim\limits_{n \rightarrow \infty} f''(t_n) = 0$ contradicting with the fact that $f''(x_s) \neq 0$.
\end{proof}

\begin{lemma}
Given the setting in Lemma \ref{lem:B1}, $\{x_n\}_{n\geq 0}$ converges monotonically to $x_s$.
\end{lemma}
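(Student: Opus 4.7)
My plan is to upgrade the eventual monotonicity granted by Lemma \ref{lem:B1} to monotonicity from the very first iterate by ruling out any sign flip of $f'$ along the iterate sequence. First, I invoke the first part of the proof of Lemma \ref{lem:B1} to assume, without loss of generality, that $x_n \ge x_s$ for all $n \ge 0$. If $f'(x_n) = 0$ for some $n$, then the iteration freezes at $x_n$, and convergence to $x_s$ forces $x_n = x_s$, which is trivially monotone. So I may assume $f'(x_n) \ne 0$ for every $n$, in which case monotonicity of $\{x_n\}$ is equivalent to $f'(x_n)$ having constant sign across the iterates, since $x_{n+1} - x_n = -\eta f'(x_n)$.

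Suppose for contradiction that $f'(x_{n_0})$ and $f'(x_{n_0+1})$ have opposite signs for some $n_0 \ge 0$. By the intermediate value theorem applied to $f'$ on the segment joining $x_{n_0}$ and $x_{n_0+1}$, there is a point $y$ strictly between them with $f'(y) = 0$. The key observation is that $\eta < 1/\ell$ together with the $\ell$-gradient Lipschitz property forbids a single GD step from straddling any zero of $f'$. Indeed,
$$
|f'(x_{n_0})| \,=\, |f'(x_{n_0}) - f'(y)| \,\le\, \ell \, |x_{n_0} - y|,
$$
so the step length is bounded as
$$
|x_{n_0+1} - x_{n_0}| \,=\, \eta \, |f'(x_{n_0})| \,\le\, \eta \ell \, |x_{n_0} - y| \,<\, |x_{n_0} - y|.
$$
But $y$ lying strictly between $x_{n_0}$ and $x_{n_0+1}$ would force $|x_{n_0+1} - x_{n_0}| > |x_{n_0} - y|$, a contradiction.

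Hence no sign flip of $f'$ can occur on the iterates, so $\{x_n\}_{n \ge 0}$ is monotone. I do not foresee any serious obstacle; once one recognizes that $\eta < 1/\ell$ is precisely the condition preventing a GD step from overshooting a critical point of $f$, the MVT argument closes the gap between eventual and full monotonicity in essentially one line. The hypothesis $f''(x_s) \ne 0$ enters only through its use in Lemma \ref{lem:B1}; it is worth noting that the step-size argument above would in fact suffice to deduce monotonicity directly, but I would present it as the stated upgrade since Lemma \ref{lem:B1} is already available and frames the claim cleanly.
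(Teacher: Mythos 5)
Your proof is correct, but it takes a genuinely different route from the paper's. The paper upgrades the eventual monotonicity of Lemma \ref{lem:B1} by taking the minimal index $M$ after which the sequence is monotone, assuming $M>0$, and then exploiting convergence of the tail to $x_s$ to locate a later iterate $x_{M+k}$ whose GD step jumps below $x_{M-1}$; the contradiction comes from applying the gradient Lipschitz bound across the whole interval between $x_{M-1}$ and $x_{M+k}$. Your argument is instead purely local: with $\eta\ell<1$, a single GD step can never straddle a zero of $f'$ (intermediate value theorem plus the Lipschitz bound on that one step), so the sign of $f'(x_n)$ can never flip and the sequence is monotone from the very first iterate. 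This is more elementary and in fact more general --- it shows that any one-dimensional GD trajectory with step size below $1/\ell$ is monotone, with no need for Lemma \ref{lem:B1}, the convergence hypothesis, or the nondegeneracy condition $f''(x_s)\neq 0$ (those only serve to identify the limit). What the paper's route buys is that it stays entirely within the scaffolding already built in Lemma \ref{lem:B1}; what yours buys is a shorter, self-contained proof that isolates $\eta<1/\ell$ as exactly the no-overshoot condition.

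One small imprecision: in the degenerate case $f'(x_n)=0$ for some $n$, the sequence freezing at $x_n=x_s$ makes the \emph{tail} constant, but it is not by itself ``trivially monotone'' for the whole sequence --- you still need monotonicity of $x_0,\dots,x_n$. This is immediate, since your no-sign-flip argument applies verbatim to all indices before the freeze, but the sentence should say so rather than dismiss the case outright.
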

\begin{proof}
Without loss of generality, assume that $x_0\geq x_s$, then using what we obtained during the proof of Lemma \ref{lem:B1}, we know that the entire sequence $\{x_n\}_{n\geq 0}$ lies on the right hand side of $x_s$ ((including $x_s$). 
Let $M$ be the minimum index that satisfies the condition in Lemma \ref{lem:B1}. 
Suppose by contradiction that $M>0$. 
So $x_s < x_{M-1}<x_M$, which implies $f'(x_{M-1})<0$ considering $x_M = x_{M-1} - \eta f'(x_{M-1})$. 
Since the sequence converges to $x_s$, there should be a $k\geq 0$ such that $x_{M+k+1}< x_{M-1} < x_{M+k}$. 
Note that 
$x_{M+k+1} = x_{M+k} - \eta f'(x_{M+k})$, so
\begin{align*}
    \eta f'(x_{M+k}) 
     = x_{M+k} - x_{M+k+1} 
     > x_{M+k} - x_{M-1}.
\end{align*}
Since $f'(x_{M-1})<0$, we have $\eta \big(f'(x_{M+k})-f'(x_{M-1})\big ) > \eta f'(x_{M+k}) > x_{M+k} - x_{M-1}$.
This contradicts the fact that $\eta(f'(x_{M+k})-f'(x_{M-1})) \leq \eta \ell (x_{M+k}-x_{M-1})<x_{M+k}-x_{M-1}$. 
\end{proof}

\section{Background on convex optimization}
We provide some context of gradient descent applied to convex functions.

\begin{definition} 
\label{def:ellapha}
~
\begin{enumerate}
\item
A differentiable function $f : \mathbb{R}^d \to \mathbb{R}$ is $\ell$-gradient Lipschitz if 
$||\nabla f (\pmb{x}_1) - \nabla f (\pmb{x}_2)|| \le \ell || \pmb{x}_1 - \pmb{x}_2||$ for all \,$\pmb{x}_1, \pmb{x}_2 \in \mathbb{R}^d$. 
\item
A twice differentiable function $f : \mathbb{R}^d \to \mathbb{R}$ is $\alpha$-strongly convex if 
$\lambda_{\min}(\nabla^2 f(\pmb{x})) \ge \alpha$ for all \,$\pmb{x} \in \mathbb{R}^d$.
\end{enumerate}
\end{definition}

The gradient Lipschitz condition controls the amount of decay in each iteration, 
and the strong convexity condition guarantees that the unique stationary point is the global minimum.
The ratio $\ell/\alpha$ is often called the condition number of the function $f$.
The following theorem shows the linear convergence of gradient descent to the global minimum $\pmb{x}^{\star}$, 
see \cite{Bubeck15}[Theorem 3.10] and  \cite{Nesterov04}[Theorem 2.1.15].

\begin{theorem} \cite{Bubeck15, Nesterov04}
\label{thm:GDconvex}
Assume that $f: \mathbb{R}^d \to \mathbb{R}$ is $\ell$-gradient Lipschitz and $\alpha$-strongly convex.
For any $\epsilon >0$, if we run gradient descent with step size $\eta = \ell^{-1}$, 
then the number of iterations to be $\epsilon$-close to $\pmb{x}^{\star}$ is
$\frac{2 \ell}{\alpha} \log \left(\frac{||\pmb{x}_0 - \pmb{x}^{\star}||}{\epsilon} \right).$
\end{theorem}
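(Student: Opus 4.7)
The plan is to show a one-step geometric contraction $\|\pmb{x}_{k+1} - \pmb{x}^{\star}\|^2 \le (1 - \alpha/\ell) \|\pmb{x}_k - \pmb{x}^{\star}\|^2$ and then iterate. First I would expand the squared distance using the GD update $\pmb{x}_{k+1} = \pmb{x}_k - \eta \nabla f(\pmb{x}_k)$ with $\eta = 1/\ell$, producing a cross term $-2\eta \langle \nabla f(\pmb{x}_k), \pmb{x}_k - \pmb{x}^{\star}\rangle$ and a square-gradient term $\eta^2 \|\nabla f(\pmb{x}_k)\|^2$. The two hypotheses will enter separately: gradient-Lipschitzness lets me absorb the square-gradient term, while strong convexity controls the cross term.

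The workhorse is the co-coercivity inequality $\langle \nabla f(\pmb{x}) - \nabla f(\pmb{y}), \pmb{x} - \pmb{y}\rangle \ge \frac{1}{\ell}\|\nabla f(\pmb{x}) - \nabla f(\pmb{y})\|^2$, valid for any convex $\ell$-gradient-Lipschitz function; I would derive it from the standard descent lemma applied to the auxiliary function $\pmb{z} \mapsto f(\pmb{z}) - \langle \nabla f(\pmb{x}), \pmb{z}\rangle$, whose minimum is attained at $\pmb{x}$. Applied at $\pmb{y} = \pmb{x}^{\star}$ where $\nabla f(\pmb{x}^{\star}) = 0$, and combined with $\eta = 1/\ell$, this bound shows that the square-gradient term is dominated by the cross term, leaving $\|\pmb{x}_{k+1} - \pmb{x}^{\star}\|^2 \le \|\pmb{x}_k - \pmb{x}^{\star}\|^2 - \eta \langle \nabla f(\pmb{x}_k), \pmb{x}_k - \pmb{x}^{\star}\rangle$. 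Then $\alpha$-strong convexity (applied at $\pmb{x}_k$ and $\pmb{x}^{\star}$, again using $\nabla f(\pmb{x}^{\star}) = 0$) gives $\langle \nabla f(\pmb{x}_k), \pmb{x}_k - \pmb{x}^{\star}\rangle \ge \alpha \|\pmb{x}_k - \pmb{x}^{\star}\|^2$, producing the advertised contraction factor $1 - \alpha/\ell$.

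Iterating yields $\|\pmb{x}_k - \pmb{x}^{\star}\| \le (1 - \alpha/\ell)^{k/2} \|\pmb{x}_0 - \pmb{x}^{\star}\|$, and the elementary inequality $-\log(1 - t) \ge t$ for $t \in (0, 1)$ converts the condition $\|\pmb{x}_k - \pmb{x}^{\star}\| \le \epsilon$ into the iteration count $k \ge \tfrac{2\ell}{\alpha} \log(\|\pmb{x}_0 - \pmb{x}^{\star}\|/\epsilon)$ stated in the theorem. The only real subtlety is the co-coercivity step: strong convexity alone gives $\langle \nabla f(\pmb{x}_k), \pmb{x}_k - \pmb{x}^{\star}\rangle \ge \alpha \|\pmb{x}_k - \pmb{x}^{\star}\|^2$, which is not enough to offset the $\eta^2 \|\nabla f(\pmb{x}_k)\|^2$ term at step size $1/\ell$, so the smoothness hypothesis must be used twice in effect — once to control the square-gradient term via co-coercivity and once implicitly through the admissible step size. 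Everything else is bookkeeping.
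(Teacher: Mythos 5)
Your proposal is correct, and there is nothing in the paper to compare it against line by line: Theorem \ref{thm:GDconvex} is stated as background and attributed to \cite{Bubeck15}[Theorem 3.10] and \cite{Nesterov04}[Theorem 2.1.15] without an in-paper proof. What you wrote is the standard derivation underlying those citations, and every step checks out: expanding $\|\pmb{x}_{k+1}-\pmb{x}^{\star}\|^2$, using co-coercivity $\langle \nabla f(\pmb{x}_k),\pmb{x}_k-\pmb{x}^{\star}\rangle \ge \frac{1}{\ell}\|\nabla f(\pmb{x}_k)\|^2$ (valid since strong convexity implies convexity, and $\nabla f(\pmb{x}^{\star})=0$) to absorb the $\eta^2\|\nabla f(\pmb{x}_k)\|^2$ term at $\eta=1/\ell$, then strong convexity for the cross term to get the contraction factor $1-\alpha/\ell$ on the squared distance, and finally $-\log(1-t)\ge t$ to convert this into the iteration count $\frac{2\ell}{\alpha}\log\left(\frac{\|\pmb{x}_0-\pmb{x}^{\star}\|}{\epsilon}\right)$, which matches the statement exactly. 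Your closing remark is also accurate: the naive bound $\|\nabla f(\pmb{x}_k)\|\le \ell\|\pmb{x}_k-\pmb{x}^{\star}\|$ combined with the strong-convexity cross term alone gives the factor $2-2\alpha/\ell$, which is not a contraction, so co-coercivity (or some other use of smoothness beyond the step-size choice) is genuinely needed. Two optional comparisons with the cited sources: since the paper defines $\alpha$-strong convexity via $\lambda_{\min}(\nabla^2 f)\ge\alpha$, one can instead write $\pmb{x}_{k+1}-\pmb{x}^{\star}=\left(I-\eta\int_0^1\nabla^2 f\bigl(\pmb{x}^{\star}+t(\pmb{x}_k-\pmb{x}^{\star})\bigr)\,dt\right)(\pmb{x}_k-\pmb{x}^{\star})$ and bound the spectral norm by $1-\alpha/\ell$, contracting the distance itself rather than its square and yielding the slightly better count $\frac{\ell}{\alpha}\log(\cdot)$; Nesterov's proof similarly exploits the combined inequality $\langle\nabla f(\pmb{x})-\nabla f(\pmb{y}),\pmb{x}-\pmb{y}\rangle\ge\frac{\alpha\ell}{\alpha+\ell}\|\pmb{x}-\pmb{y}\|^2+\frac{1}{\alpha+\ell}\|\nabla f(\pmb{x})-\nabla f(\pmb{y})\|^2$ for a sharper constant. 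Your weaker contraction already delivers exactly the bound claimed in the theorem, so no gap remains.
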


\section{Proof of Theorem \ref{thm:nolocal}}
Suppose by contradiction that with positive probability, the walk is localized at some points $\{k, \ldots, \ell\}$.
We focus on the left end $k$. 
Let $\tau^k_n$ be the time at which the point $k$ is visited $n$ times. For $n$ sufficiently large, the point $k+1$ is visited approximately at least $n$ times by $\tau^k_n$.
So at time $\tau^k_n$, the walk moves from $k$ to $k+1$ with probability bounded from above by 
$C/w(n)$ for some constant $C>0$.
Consequently, the probability that the walk is localized at $\{k, \ldots, \ell\}$ is less than $\prod_{n > 0} \frac{C}{w(n)}$. 
By standard analysis, $\prod_{n > 0} \frac{C}{w(n)} =  0$ if $w(n) \rightarrow \infty$ as $n \rightarrow \infty$.
This leads to the desired result.

\section{Proof of Theorem \ref{thm:PGDOT}}
We show how the thin-pancake property of saddle points is used to prove Theorem \ref{thm:PGDOT}.
Recall that an $\epsilon$-second-order stationary point is a point with a small gradient, and where the Hessian does not have a large negative eigenvalue.
Let us put down the basic idea in Section \ref{sc:22} with the parameters in Algorithm \ref{algo:PGDOT} (PGDOT).
If we are currently at an iterate $\pmb{x}_t$ which is not an $\epsilon$-second-order stationary point, there are two cases:
(1)
The gradient is large: $|| \nabla f(\pmb{x_t})|| \ge g_{\tiny \mbox{thres}}$;
(2)
$\pmb{x}_t$ is close to a saddle point: $|| \nabla f(\pmb{x_t})|| \le g_{\tiny \mbox{thres}}$ 
and $\lambda_{\min} ( \nabla^2 f(\pmb{x_t})) \le -\sqrt{\rho \epsilon}$.
The case $(1)$ is easy to deal with by the following elementary lemma. 
\begin{lemma}
\label{lem:largegrad}
Assume that $f: \mathbb{R}^d \to \mathbb{R}$ is $\ell$-gradient Lipschitz. 
Then for GD with step size $\eta < \ell^{-1}$, we have
$f(\pmb{x}_{t+1}) - f(\pmb{x}_t) \le -\frac{\eta}{2} ||\nabla f(\pmb{x}_t) ||^2$.
\end{lemma}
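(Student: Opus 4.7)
The plan is to establish this through the standard quadratic upper bound (sometimes called the descent lemma) that follows from gradient Lipschitz continuity, and then plug in the GD update. First, I would prove the one-step inequality
\[
f(\pmb{y}) \le f(\pmb{x}) + \langle \nabla f(\pmb{x}), \pmb{y} - \pmb{x}\rangle + \tfrac{\ell}{2} \|\pmb{y} - \pmb{x}\|^2
\quad \text{for all } \pmb{x},\pmb{y} \in \mathbb{R}^d,
\]
by writing $f(\pmb{y}) - f(\pmb{x}) = \int_0^1 \langle \nabla f(\pmb{x} + s(\pmb{y} - \pmb{x})), \pmb{y} - \pmb{x}\rangle\, ds$ (fundamental theorem of calculus along the segment from $\pmb{x}$ to $\pmb{y}$), subtracting and adding $\langle \nabla f(\pmb{x}), \pmb{y} - \pmb{x}\rangle$, and then estimating the remainder with Cauchy--Schwarz and the hypothesis $\|\nabla f(\pmb{x} + s(\pmb{y}-\pmb{x})) - \nabla f(\pmb{x})\| \le \ell s \|\pmb{y} - \pmb{x}\|$. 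The resulting integral is $\int_0^1 \ell s\, ds \cdot \|\pmb{y}-\pmb{x}\|^2 = \tfrac{\ell}{2}\|\pmb{y}-\pmb{x}\|^2$, which gives the bound.

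Next, I would specialize the bound to the GD update $\pmb{x}_{t+1} = \pmb{x}_t - \eta \nabla f(\pmb{x}_t)$, setting $\pmb{x} = \pmb{x}_t$ and $\pmb{y} = \pmb{x}_{t+1}$. Substituting $\pmb{y}-\pmb{x} = -\eta\nabla f(\pmb{x}_t)$ yields
\[
f(\pmb{x}_{t+1}) - f(\pmb{x}_t) \le -\eta \|\nabla f(\pmb{x}_t)\|^2 + \tfrac{\ell \eta^2}{2}\|\nabla f(\pmb{x}_t)\|^2 = -\eta\bigl(1 - \tfrac{\ell \eta}{2}\bigr)\|\nabla f(\pmb{x}_t)\|^2.
\]
Finally, I would invoke the step size condition $\eta < 1/\ell$, which gives $\tfrac{\ell \eta}{2} < \tfrac{1}{2}$, hence $1 - \tfrac{\ell \eta}{2} > \tfrac{1}{2}$, so the coefficient of $\|\nabla f(\pmb{x}_t)\|^2$ is at most $-\eta/2$, completing the proof.

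There is no real obstacle here: the only care needed is in justifying the integral representation (which is standard once $\nabla f$ is continuous, as implied by gradient Lipschitz), and in making sure the endpoint case $\eta = 1/\ell$ also works, so that the strict inequality $\eta < 1/\ell$ in the hypothesis gives a fortiori the desired bound. The whole argument is two to three lines beyond the descent lemma, so the proof I write will essentially just assemble these pieces.
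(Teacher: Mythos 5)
Your proof is correct: it is the standard descent-lemma argument (quadratic upper bound from $\ell$-gradient Lipschitzness, substitution of the GD update, and the step-size condition $\eta \le \ell^{-1}$ giving the factor $1-\tfrac{\ell\eta}{2} \ge \tfrac12$), which is exactly the elementary argument the paper relies on, since it states Lemma \ref{lem:largegrad} without proof as a standard fact. No gaps; your remark that the bound already holds at $\eta=\ell^{-1}$ is also accurate.
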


The case $(2)$ is more subtle, and the following lemma gives the decay of the function value after a random perturbation described in Algorithm \ref{algo:PGDOT} (PGDOT).

\begin{lemma}
\label{lem:nearsaddle}
Assume that $f: \mathbb{R}^d \to \mathbb{R}$ is $\ell$-gradient Lipschitz and $\rho$-Hessian Lipschitz.
If $|| \nabla f(\pmb{x_t})|| \le g_{\tiny \mbox{thres}}$ 
and $\lambda_{\min} ( \nabla^2 f(\pmb{x_t})) \le -\sqrt{\rho \epsilon}$, 
then adding one perturbation step as in Algorithm \ref{algo:PGDOT} followed by $t_{\tiny \mbox{thres}}$ steps of GD with step size $\eta$, 
we have $f(\pmb{x}_{t + t_{\tiny \mbox{thres}}}) - f(\pmb{x}_t) \le - f_{\tiny \mbox{thres}}$ with probability at least $1 - \frac{d \ell}{\sqrt{\rho \epsilon}} e^{-\chi}$.
\end{lemma}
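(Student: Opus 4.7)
My plan is to split the argument into a deterministic geometric step (which I will inherit from the PGD analysis of Jin et al.) and a probabilistic step adapted to the occupation-time perturbation.

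For the geometric step, let $e_{\min}$ be a unit eigenvector of $\nabla^2 f(\pmb{x}_t)$ associated with its most negative eigenvalue (at most $-\sqrt{\rho\epsilon}$ by hypothesis). I would invoke the standard two-point coupling argument: if $\pmb{w}, \pmb{w}' \in B^d(\pmb{0},r)$ satisfy $\pmb{w} - \pmb{w}' = \mu\, e_{\min}$, the gradient-descent trajectories starting from $\pmb{x}_t+\pmb{w}$ and $\pmb{x}_t+\pmb{w}'$ separate at the exponential rate $(1+\eta\sqrt{\rho\epsilon})^k$ so long as both remain near $\pmb{x}_t$. The gradient- and Hessian-Lipschitz bookkeeping of Jin et al. then forces at least one of the two trajectories to exit a small neighbourhood of $\pmb{x}_t$ within $t_{\tiny \mbox{thres}}$ iterations, yielding the required decrement $f_{\tiny \mbox{thres}}$, provided $|\mu|$ exceeds a pancake width $w_0 \asymp (\ell r/\sqrt{\rho\epsilon})\,e^{-\chi}$. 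Consequently, the ``stuck set'' $\mathcal{X}_{\text{stuck}}$ of bad initial perturbations is contained in a slab of width $w_0$ along $e_{\min}$. This step is purely deterministic and does not depend on the distribution of the perturbation, so it carries over verbatim from Jin et al.\ once PGDOT's parameter schedule is plugged in.

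The probabilistic step is where PGDOT genuinely differs from PGD. I would bound $\mathbb{P}(\pmb{\xi}\in\mathcal{X}_{\text{stuck}})$ by $\mathbb{P}(\langle \pmb{\xi}, e_{\min}\rangle\in I)$ for some interval $I$ of length $w_0$, and then estimate this via a density bound on the projection. Each component $\xi^i$ has density bounded by $\sqrt d/r$ on $[-r/\sqrt d, r/\sqrt d]$: on the left half-interval it equals $p_i\sqrt d/r$ and on the right $(1-p_i)\sqrt d/r$, both at most $\sqrt d/r$ irrespective of the occupation-time bias $p_i$. The rescaled term $e_{\min}^i\xi^i$ has density at most $\sqrt d/(r|e_{\min}^i|)$. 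Applying Young's convolution inequality $\|f\ast g\|_\infty\le \|f\|_\infty\|g\|_1$ iteratively across $i$, the density of $\langle\pmb{\xi}, e_{\min}\rangle$ is bounded by $\min_i \sqrt d/(r|e_{\min}^i|) \le d/r$, since $\max_i|e_{\min}^i|\ge 1/\sqrt d$ when $\|e_{\min}\|=1$. Hence $\mathbb{P}(\pmb{\xi}\in\mathcal{X}_{\text{stuck}}) \le w_0\cdot d/r$, and substituting $w_0 \asymp (\ell r/\sqrt{\rho\epsilon})\,e^{-\chi}$ yields the advertised failure probability $(d\ell/\sqrt{\rho\epsilon})\,e^{-\chi}$.

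The main obstacle is this probabilistic step. The original PGD proof leverages rotational symmetry of $\mbox{Unif}(B^d(\pmb{0},r))$, which is absent for the occupation-time-adapted law; worse, a naïve density-ratio comparison of $\pmb{\xi}$ against uniform measure on its enclosing hypercube would pay a prohibitive $2^d$ factor, so that route is hopeless. The resolution is that Young's $L^\infty$/$L^1$ inequality dominates the projected density by the weakest single-coordinate density, making the occupation-time biases $p_i$ drop out completely. The cost is a single extra factor of $\sqrt d$ relative to PGD (in the worst case $e_{\min}$ concentrates on one coordinate, for which $|e_{\min}^i|\ge 1/\sqrt d$ may be nearly tight, whereas the ball enjoyed rotational symmetry), which is exactly the gap between our $d\ell/\sqrt{\rho\epsilon}$ bound and the $\ell\sqrt d/\sqrt{\rho\epsilon}$ of Jin et al.
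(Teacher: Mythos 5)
The probabilistic half of your argument has a genuine gap: the reduction of $\mathcal{X}_{\text{stuck}}$ to a slab. The two-point coupling lemma you inherit from Jin et al.\ (the paper's Lemma \ref{lem:pancake}) only says that on every \emph{line parallel to} $e_{\min}$ the stuck set has extent at most $w_0$; it does not place $\mathcal{X}_{\text{stuck}}$ inside a single slab $\{\pmb{x}:\langle \pmb{x}-\pmb{x}_t,e_{\min}\rangle\in I\}$ of width $w_0$. The stuck region is a possibly curved ``non-flat thin pancake,'' and this is exactly why your key inequality $\mathbb{P}(\pmb{\xi}\in\mathcal{X}_{\text{stuck}})\le\mathbb{P}(\langle\pmb{\xi},e_{\min}\rangle\in I)$ is unjustified: your Young's-inequality bound on the density of the marginal $\langle\pmb{\xi},e_{\min}\rangle$ (which is correct, and a nice observation in that it makes the occupation-time biases $p_i$ drop out) only controls probabilities of fixed slabs. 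To cover a bent pancake you need a per-line (Fubini/disintegration) estimate, i.e.\ a bound on the \emph{conditional} density of $\langle\pmb{\xi},e_{\min}\rangle$ given the component of $\pmb{\xi}$ orthogonal to $e_{\min}$, and Young's inequality does not give that; the naive pointwise-density route you yourself reject reappears here with its $2^{d}$ loss.

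The paper closes exactly this step differently: it conditions on the sign pattern of the perturbation, i.e.\ on which orthant $\mathcal{O}_i$ (centered at $\widetilde{\pmb{x}}$) the perturbation lands in. Conditioned on the orthant, the occupation-time perturbation is \emph{exactly uniform} on a sub-cube of side $r/\sqrt{d}$, so $\mathbb{P}(\chi_{\tiny \mbox{stuck}})=\sum_i p_i\,\mathrm{Vol}(\chi_{\tiny \mbox{stuck}}\cap\mathcal{O}_i)/\mathrm{Vol}(C^{(d)}(\widetilde{\pmb{x}},r/\sqrt d)\cap\mathcal{O}_i)$, and within each sub-cube the thin-pancake width together with a cube cross-section volume bound gives a ratio $O(\delta)$ uniformly in $i$; since the orthant probabilities $p_i$ sum to $1$, the bias contributes nothing and no $2^d$ factor appears. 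So the correct mechanism for neutralizing the non-uniform signs is conditional uniformity per orthant, not a bound on the projected marginal. Your deterministic step and your accounting of the final failure probability are in the right spirit, but to make the argument sound you must either (i) adopt the orthant-conditioning/Fubini argument as in the paper, or (ii) strengthen your claim about $\mathcal{X}_{\text{stuck}}$ to genuine slab containment, which the coupling lemma does not provide.
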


\cite{JG17} proved Lemma \ref{lem:nearsaddle} for PGD, 
and used it together with Lemma \ref{lem:largegrad} to prove Theorem \ref{thm:PGD}.
We will use  the same argument, with Lemmas \ref{lem:largegrad} and \ref{lem:nearsaddle}, leading to Theorem \ref{thm:PGDOT} for PGDOT.

Now, let us explain how to prove Lemma \ref{lem:nearsaddle} via a purely geometric property of saddle points. 
Consider a point $\widetilde{\pmb{x}}$ satisfying the condition 
$|| \nabla f(\widetilde{\pmb{x}})|| \le g_{\tiny \mbox{thres}}$ 
and $\lambda_{\min} ( \nabla^2 f(\widetilde{\pmb{x}})) \le -\sqrt{\rho \epsilon}$.
After adding the perturbation in Algorithm \ref{algo:PGDOT}, the resulting vector can be viewed as a distribution over the cube $C^{(d)}(\widetilde{\pmb{x}}, r/\sqrt{d})$.
Similar as in \cite{JG17}, we call $C^{(d)}(\widetilde{\pmb{x}}, r/\sqrt{d})$ the perturbation cube
which is divided into two regions: 
(1)
escape region $\chi_{\tiny \mbox{escape}}$ which consists of all points $\pmb{x} \in C^{(d)}(\widetilde{\pmb{x}}, r/\sqrt{d})$ whose function value decreases by at least $f_{\tiny \mbox{thres}}$ after $t_{\tiny \mbox{thres}}$ steps;
(2)
stuck region $\chi_{\tiny \mbox{stuck}}$ which is the complement of $\chi_{\tiny \mbox{escape}}$ in $C^{(d)}(\widetilde{\pmb{x}}, r/\sqrt{d})$.
The key idea is that the stuck region $\chi_{\tiny \mbox{stuck}}$ looks like a non-flat thin pancake, which has a very small volume compared to that of $C^{(d)}(\widetilde{\pmb{x}}, r/\sqrt{d})$.
This claim can be formalized by the following lemma, which is a direct corollary of \cite{JG17}[Lemma 11] as $C^{(d)}(\widetilde{\pmb{x}}, r/\sqrt{d}) \subseteq B^d(\widetilde{\pmb{x}}, r)$:

\begin{lemma} 
\label{lem:pancake}
Assume that $\widetilde{\pmb{x}}$ satisfies $|| \nabla f(\widetilde{\pmb{x}})|| \le g_{\tiny \mbox{thres}}$ 
and $\lambda_{\min} ( \nabla^2 f(\widetilde{\pmb{x}})) \le -\sqrt{\rho \epsilon}$.
Let $\pmb{e}_1$ be the smallest eigendirction of $\nabla^2 f(\widetilde{\pmb{x}})$. 
For any $\delta < 1/3$ and any $\pmb{u}, \pmb{v} \in C^{(d)}(\widetilde{\pmb{x}}, r/\sqrt{d})$,
if $\pmb{u} - \pmb{v} = \mu r \pmb{e}_1$ and $\mu \ge \delta/(2 \sqrt{d})$,
then at least one of $\pmb{u}$ and $\pmb{v}$ is not in the stuck region $\chi_{\tiny \mbox{stuck}}$.
\end{lemma}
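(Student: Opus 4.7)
The plan is to deduce the cube version of the thin-pancake property as an immediate consequence of Lemma 11 in \cite{JG17}, which is the analogous statement for the Euclidean ball $B^d(\widetilde{\pmb{x}}, r)$, via the elementary containment of the cube inside the ball.

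First, I would establish $C^{(d)}(\widetilde{\pmb{x}}, r/\sqrt{d}) \subseteq B^d(\widetilde{\pmb{x}}, r)$ by the trivial coordinate-to-norm calculation: for every $\pmb{y} \in C^{(d)}(\widetilde{\pmb{x}}, r/\sqrt{d})$, $\|\pmb{y} - \widetilde{\pmb{x}}\|^2 = \sum_{i=1}^d (y^i - \widetilde{x}^i)^2 \le d \cdot (r/\sqrt{d})^2 = r^2$. In particular, the two candidate points $\pmb{u}, \pmb{v}$ of the hypothesis lie simultaneously in the perturbation cube and in the ball of radius $r$.

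Next, I would invoke Lemma 11 of \cite{JG17}, which asserts the following under the very same gradient and Hessian conditions on $\widetilde{\pmb{x}}$ and the same algorithmic schedule $(\eta, r, g_{\tiny \mbox{thres}}, f_{\tiny \mbox{thres}}, t_{\tiny \mbox{thres}})$ prescribed by Algorithm \ref{algo:PGDOT}: inside $B^d(\widetilde{\pmb{x}}, r)$, the ball-based stuck region is confined to a thin slab of width $\delta r/(2\sqrt{d})$ in the direction $\pmb{e}_1$ of the smallest Hessian eigenvalue. Contrapositively, if $\pmb{u}, \pmb{v} \in B^d(\widetilde{\pmb{x}}, r)$ satisfy $\pmb{u} - \pmb{v} = \mu r \pmb{e}_1$ with $\mu \ge \delta/(2\sqrt{d})$, then at least one of them is an escape point, meaning that $t_{\tiny \mbox{thres}}$ iterations of GD starting from it decrease $f$ by at least $f_{\tiny \mbox{thres}}$.

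Finally, I would observe that the escape/stuck dichotomy is intrinsic to the starting point: it refers only to the subsequent deterministic GD trajectory, not to the region used to define the perturbation distribution. Consequently the cube's stuck region $\chi_{\tiny \mbox{stuck}}$ is the intersection of the ball's stuck region with $C^{(d)}(\widetilde{\pmb{x}}, r/\sqrt{d})$, so any point of the cube that escapes in the ball-sense automatically escapes in the cube-sense. Applying the ball statement to $\pmb{u}, \pmb{v}$ therefore yields that at least one is outside $\chi_{\tiny \mbox{stuck}}$. The only real obstacle is to verify that the constants appearing in \cite{JG17}[Lemma 11] match the ones used here; since PGDOT uses exactly the parameter schedule of \cite{JG17} and the non-uniform coordinate-wise perturbation is still supported in $C^{(d)}(\widetilde{\pmb{x}}, r/\sqrt{d}) \subseteq B^d(\widetilde{\pmb{x}}, r)$, no re-derivation of the underlying Hessian-slab geometry is required.
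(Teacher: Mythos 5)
Your proposal is correct and follows essentially the same route as the paper, which likewise obtains the lemma as a direct corollary of Lemma 11 in \cite{JG17} via the containment $C^{(d)}(\widetilde{\pmb{x}}, r/\sqrt{d}) \subseteq B^d(\widetilde{\pmb{x}}, r)$. Your added remark that the stuck/escape dichotomy depends only on the deterministic GD trajectory from the starting point, not on the perturbation distribution, is a useful (if implicit in the paper) clarification but does not change the argument.
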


To prove Lemma \ref{lem:nearsaddle},  it suffices to check that 
$\mathbb{P}(\chi_{\tiny \mbox{stuck}}) \le C \delta$ for some $C > 0$.
This criterion is general for any (random) perturbation. 
Let $\mathcal{O}_1, \ldots, \mathcal{O}_{2^d}$ be the orthants centered at $\widetilde{\pmb{x}}$; that is, the space $\mathbb{R}^d$ is divided into $2^d$ subspaces according to the coordinate signs of $\cdot - \widetilde{\pmb{x}}$.
The symbol $\sgn(\mathcal{O}_i) \in \{-1,1\}^d$ denotes the coordinate signs of $\pmb{y} - \widetilde{\pmb{x}}$ for any $\pmb{y} \in \mathcal{O}_i$.
For $1 \le i \le 2^d$, let
\begin{equation*}
\scalemath{0.9}{
p_i  = \prod_{\sgn(\mathcal{O}_i)_k = -1} \frac{w(R^k_t)}{w(L^k_t) + w(R^k_t)}\prod_{\sgn(\mathcal{O}_i)_k = 1} \frac{w(L^k_t)}{w(L^k_t) + w(R^k_t)}}
\end{equation*}
be the probability that the random perturbation drives $\widetilde{\pmb{x}}$ into $C^{(d)}(\widetilde{\pmb{x}}, r/\sqrt{d}) \cap \mathcal{O}_i$.
Consequently,
$\mathbb{P}(\chi_{\tiny \mbox{stuck}}) = \sum\limits_{i = 1}^{2^d} p_i \frac{\mbox{Vol}(\chi_{\tiny \mbox{stuck}} \cap \mathcal{O}_i)}{\mbox{Vol}(C^{(d)}(\widetilde{\pmb{x}}, r/\sqrt{d}) \cap \mathcal{O}_i)}$,
where $\mbox{Vol}(\cdot)$ denotes the volume of a domain. 
It is easy to see that $\mbox{Vol}(C^{(d)}(\widetilde{\pmb{x}}, r/\sqrt{d}) \cap \mathcal{O}_i) = (r/\sqrt{d})^d$. 
By Lemma \ref{lem:pancake} and the slicing volume bound \cite{Ball86},
$\mbox{Vol}(\chi_{\tiny \mbox{stuck}} \cap \mathcal{O}_i) \le \sqrt{2} (r/\sqrt{d})^{d-1} \frac{\delta r}{\sqrt{d}}$.
Therefore,
$\frac{\mbox{Vol}(\chi_{\tiny \mbox{stuck}} \cap \mathcal{O}_i)}{\mbox{Vol}(C^{(d)}(\widetilde{\pmb{x}}, r/\sqrt{d}) \cap \mathcal{O}_i)}
\le \sqrt{2} \delta$ implying that $\mathbb{P}(\chi_{\tiny \mbox{stuck}}) \le \sqrt{2} \delta$.

Note that this proof does not rely on the full history of states for $L_t$ and $R_t$. Thus, one can restrict the number of previous iterations as is done in Section \ref{s4} using the hyperparameter $t_{\text{count}}$.

\section{Hyperparameters in the numerical examples}

 Table \ref{tab_parameter} summarizes the hyperparameters used for the empirical studies in Section \ref{s4}. 

\begin{table}[ht]
\caption{Hyperparameters.}\label{tab_parameter}
\begin{center}
\begin{tabular}{@{}l|cccccccccc@{}}
\toprule
 & $d$ & \# of steps & $h$ & $t_{\tiny \mbox{count}}$ & $\eta$ & $t_{\tiny \mbox{thres}}$ &$g_{\tiny \mbox{thres}}$ & $r$ & momentum & batch size\\ \midrule
Example 1 & 4  & 2000  & 0.04  & 200 & 0.1  & 10 & 0.01 & 0.04 & 0.5 & -\\
Example 2 & 16 & 14000 & 0.04 &  200 & 0.1 & 50 & 0.1 & 0.1 & 0.5 & -\\
Example 3 (1) & 2 & 3000 & 1 &  200 & 0.01  & 50 & 0.01 & 0.01 & 0.5 & -\\
Example 3 (2) & 10 & 1200 & 1 &  200 & 0.001  & 50 & 1 & 0.01 & 0.5 & -\\
Example 4 (MNIST) & - & 200 (epochs) & $10^{12}$ &  50 & 0.01  & 10 & 0.1 & 0.5 & 0.9 & 128\\
Example 4 (CIFAR-10) & - & 200 (epochs) & $10^{12}$ &  50 & 0.01  & 10 & 0.1 & 0.5 & 0.9 & 128\\
\bottomrule
\end{tabular}
\end{center}
\end{table}
Note that the implementation of PAGDOT is almost the same as the implementation of PGDOT with one difference: instead of GD, Nesterov's AGD is deployed. Also, for the last two sets of experiments, $h$ is set to be $10^{12}$, a very large number. This way we are basically ignoring the hyperparameter $h$. 

All the experiments are conducted on either a local machine or Google Colab using a CPU. Each of the first three examples takes a few minutes to run, and each of the experiments in example 4 takes a few hours.

\section{AMSGrad and RMSProp also fail}

Figure \ref{fig:example4_apdx} confirms that besides SGD and Adam, some of their variants such as AMSGrad and RMSProp also fail in the training process when the weights and biases of the simple MLPs are initialized with $\mathcal{N}(-1, 0.01)$. Note that $n\_hidden$ is the number of neurons in the hidden layer of simple MLP.

\begin{figure}[ht]
    \centering
    \includegraphics[width=0.85\linewidth]{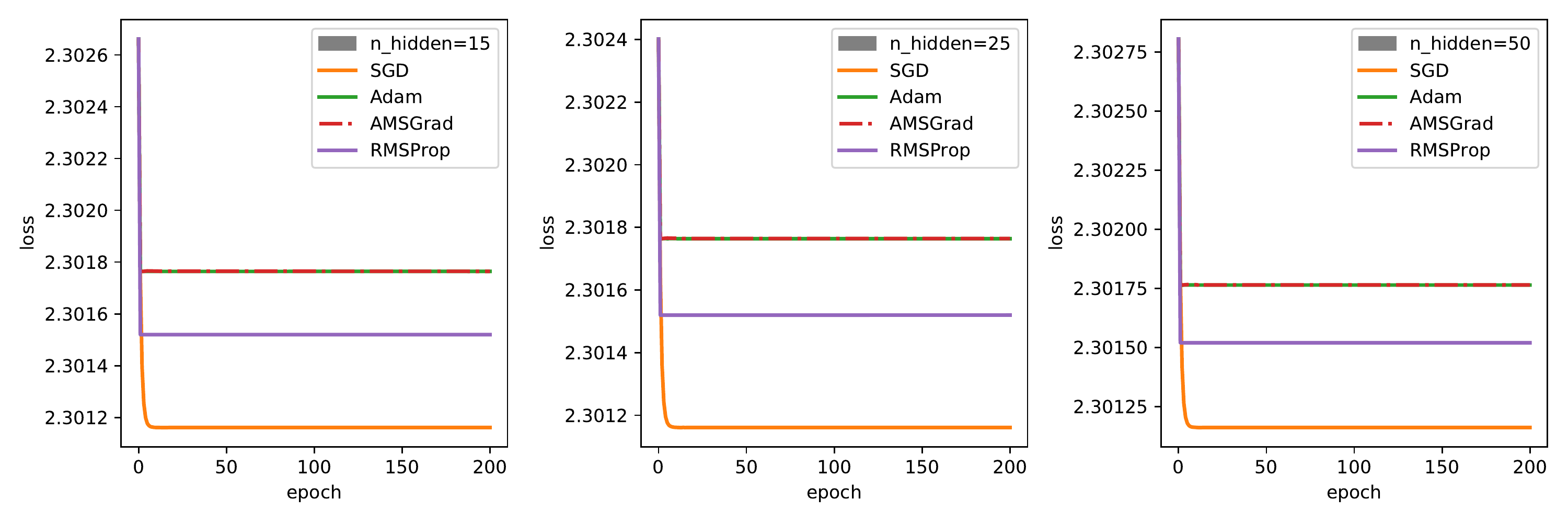}
    \includegraphics[width=0.85\linewidth]{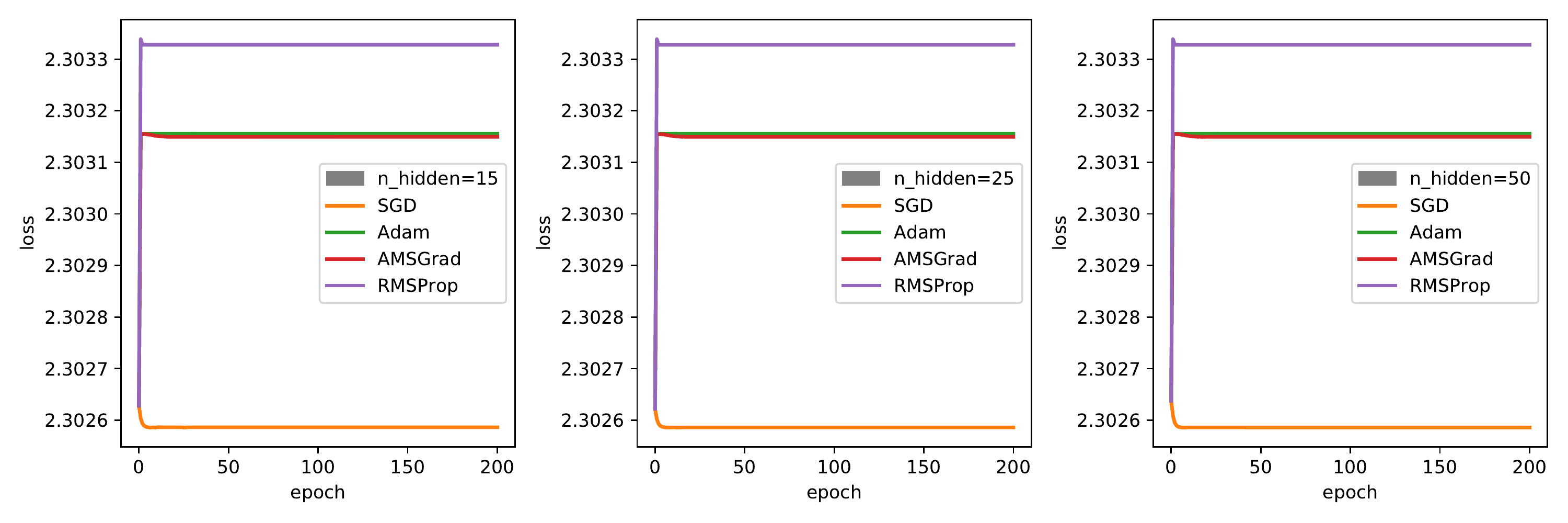}
    \caption{Training simple MLPs with $\mathcal{N}(-1, 0.01)$ initialization in Example 4. Top three: MNIST; Bottom three: CIFAR-10.}
    \label{fig:example4_apdx}
\end{figure}

\end{document}